\definecolor{phase}{rgb}{0.57, 0.64, 0.69}
\newtheorem{dummy}{Dummy}[section]              
\newtheorem{proposition}[dummy]{Proposition}
\Crefname{proposition}{Proposition}{Propositions}
\newtheorem{lemma}[dummy]{Lemma}
\Crefname{lemma}{Lemma}{Lemmas}
\newtheorem{theorem}[dummy]{Theorem}
\Crefname{theorem}{Theorem}{Theorems}
\newtheorem{corollary}[dummy]{Corollary}
\theoremstyle{definition}
\newtheorem{definition}[dummy]{Definition}
\newtheorem{remark}[dummy]{Remark}
\newtheorem{approximation}[dummy]{Approximation}
\newcommand{\reals}[1][]{\ensuremath{\mathds{R}^{#1}}\xspace}        
\newcommand{\fun}[2][s]{\ensuremath{{#2}(#1)}}     
\newcommand{\ptc}[1][]{\ensuremath{B_{#1}}\xspace}                    
\newcommand{\eptc}[1][]{\ensuremath{B^e_{#1}}\xspace}                 
\newcommand{\htc}[1][]{\ensuremath{C_{#1}}\xspace}                    
\newcommand{\ehtc}[1][]{\ensuremath{C^e_{#1}}\xspace}                 
\newcommand{\gpc}[1][]{\ensuremath{U^{(+)}_{#1}}\xspace}              
\newcommand{\gnc}[1][]{\ensuremath{U_{#1}}\xspace}              
\newcommand{\mixc}[1][]{\ensuremath{U^{(-)}_{\epsilon #1}}\xspace}    
\newcommand{\parrate}{\ensuremath{\lambda^{(+)}}\xspace}          
\newcommand{\narrate}{\ensuremath{\lambda^{(-)}}\xspace}          
\newcommand{\pphases}{\ensuremath{m^{\footnotesize{(+)}}}\xspace}          
\newcommand{\pintm}{\ensuremath{\mathbf{T}^{(+)}}\xspace}          
\newcommand{\pinp}{\ensuremath{\boldsymbol \alpha^{(+)}}\xspace}      
\newcommand{\pexr}{\ensuremath{\mathbf{t}^{(+)}}\xspace}      
\newcommand{\levymix}[1][t]{\ensuremath{\fun[#1]{X_\epsilon}}\xspace}       
\newcommand{\levymixcase}[2][t]{\ensuremath{\fun[#1]{X_{\epsilon,#2}}}\xspace}       
\newcommand{\levymixnew}[1][t]{\ensuremath{\fun[#1]{Y_\epsilon}}\xspace}       
\newcommand{\levynew}[1][t]{\ensuremath{\fun[#1]{Y}}\xspace}       
\newcommand{\jumpprocess}[1][t]{\ensuremath{\fun[#1]{J_\epsilon}}\xspace}       
\newcommand{\ppp}{\ensuremath{N^{(+)}}\xspace}       
\newcommand{\npp}{\ensuremath{N}\xspace}       
\newcommand{\nppmix}{\ensuremath{N^{(-)}_\epsilon}\xspace}       
\newcommand{\ruintimemix}{\ensuremath{\tau_{\epsilon}}\xspace}       
\newcommand{\emeasure}[1][u]{\ensuremath{\mathds{E}_{#1}}\xspace}    
\newcommand{\deficitruinmix}{\ensuremath{-\levymix[\ruintimemix]}\xspace}       
\newcommand{\surpluspriorruinmix}{\ensuremath{\levymix[\ruintimemix-]}\xspace}       
\newcommand{\gerbermixq}[1][q]{\ensuremath{GS_{\penaltyfunction}^{\epsilon}\left(u,#1\right)}\xspace}       
\newcommand{\approxgerbermixq}[1][q]{\ensuremath{\widehat{GS_{\penaltyfunction}^{\epsilon}}\left(u,#1\right)}\xspace}       
\newcommand{\levy}[1][t]{\ensuremath{\fun[#1]{X}}\xspace}       
\newcommand{\ruintime}{\ensuremath{\tau}\xspace}       
\newcommand{\deficitruin}{\ensuremath{-\levy[\ruintime]}\xspace}       
\newcommand{\surpluspriorruin}{\ensuremath{\levy[\ruintime-]}\xspace}       
\newcommand{\penaltyf}[2]{\ensuremath{\omega\left(#1,#2\right)}\xspace}    
\newcommand{\penaltyfunction}{\ensuremath{\omega}\xspace}    
\newcommand{\penaltyfunctionruin}{\ensuremath{1}\xspace}    
\newcommand{\gerberq}[1][q]{\ensuremath{GS_{\penaltyfunction}\left(u,#1\right)}\xspace}       
\newcommand{\approxgerbermixruinq}[1][q]{\ensuremath{\widehat{GS_{\penaltyfunctionruin}^{\epsilon}}\left(u,#1\right)}\xspace}       
\newcommand{\densityresolventq}[1][q]{\ensuremath{r^{(#1)}(u,z)}\xspace}       
\newcommand{\densityresolventmatrixq}[1][q]{\ensuremath{\mathbf{r}^{(#1)}(u,z)}\xspace}       
\newcommand{\densityresolventmatrix}{\ensuremath{\mathbf{r}(u,z)}\xspace}       
\newcommand{\densityresolventmatrixmixq}[1][q]{\ensuremath{\mathbf{r}^{(#1)}_\epsilon(u,z)}\xspace}       
\newcommand{\correctionresolventq}[1][q]{\ensuremath{v^{(#1)}(u,z)}\xspace}       
\newcommand{\ptcorrectionresolventq}[1][q]{\ensuremath{v^{(#1)}_p(u,z)}\xspace}       
\newcommand{\htcorrectionresolventq}[1][q]{\ensuremath{v^{(#1)}_h(u,z)}\xspace}       
\newcommand{\ruinprobability}[1][u]{\ensuremath{\psi(#1)}\xspace}       
\newcommand{\approxruinprobabilitymix}[1][u]{\ensuremath{\widehat{\psi}_{\epsilon}(#1)}\xspace} 
\newcommand{\statespace}{\ensuremath{E}\xspace}     
\newcommand{\lapexpmix}[1][s]{\ensuremath{\phi_\epsilon(#1)}\xspace}      
\newcommand{\lapexp}[1][s]{\ensuremath{\phi(#1)}\xspace}      
\newcommand{\mfmixq}[1][s]{\ensuremath{\mathbf{F}^{(q)}_\epsilon(#1)}\xspace}      
\newcommand{\mf}[1][s]{\ensuremath{\mathbf{F}(#1)}\xspace}      
\newcommand{\mfq}[1][s]{\ensuremath{\mathbf{F}^{(q)}(#1)}\xspace}      
\newcommand{\mfmix}[1][s]{\ensuremath{\mathbf{F}_\epsilon(#1)}\xspace}      
\newcommand{\adjointmfmixq}[1][s]{\ensuremath{\mathbf{\mathfrak{F}}^{(q)}_\epsilon(#1)}\xspace}      
\newcommand{\adjointmfq}[1][s]{\ensuremath{\mathbf{\mathfrak{F}}^{(q)}(#1)}\xspace}      
\newcommand{\adjointmfelementq}[1][ij]{\ensuremath{\mathfrak{f}^{(q)}_{#1}}\xspace}      
\newcommand{\adjointmxk}[1][]{\ensuremath{\ltm{\mathbf{\mathfrak{K}}_{#1}}}\xspace}     
\newcommand{\e}[1][]{\ensuremath{\mathds{E}{#1}}\xspace}    
\newcommand{\pr}[1][]{\ensuremath{\mathds{P}{#1}}\xspace}   
\newcommand{\adj}[1]{\ensuremath{\textup{adj}#1}}   
\newcommand{\vect}[1]{\ensuremath{\mathbf{#1}}\xspace}  
\newcommand{\diag}{\ensuremath{\text{diag}}\xspace}  
\newcommand{\n}[1][]{\ensuremath{\mathcal{N}}\xspace}    
\newcommand{\mean}[1][]{\ensuremath{\mu_{#1}}\xspace}    
\newcommand{\indfun}[1][]{\ensuremath{\mathds{1}_{#1}}\xspace}       
\newcommand{\equalindistribution}{\ensuremath{\stackrel{\mathfrak{D}}{=}}\xspace}					
\newcommand{\limitmatrixmix}[1][]{\ensuremath{\mathbf{A}_{\epsilon}}\xspace}     
\newcommand{\limitmatrixdis}[1][]{\ensuremath{\mathbf{A}^{\bullet}_{\epsilon}}\xspace}     
\newcommand{\matrixeigenmix}[1][]{\ensuremath{\mathbf{L}_{\epsilon}}\xspace}     
\newcommand{\matrixeigendis}[1][]{\ensuremath{\mathbf{L}^{\bullet}_{\epsilon}}\xspace}     
\newcommand{\zeromatrix}{\ensuremath{\mathbb{O}}\xspace}     
\newcommand{\unitvector}[1][]{\ensuremath{\mathbf{e}_{#1}}\xspace}           
\newcommand{\unitcolumn}{\ensuremath{\mathbf{1}}\xspace}           
\newcommand{\unitmatrix}{\ensuremath{\mathbf{U}}\xspace}           
\newcommand{\im}[1][]{\ensuremath{\mathbf{I}_{#1}}\xspace}           
\newcommand{\mxk}[1][]{\ensuremath{\ltm{\mathbf{K}_{#1}}}\xspace}     
\newcommand{\mxkelement}[2][s]{\ensuremath{k^{#2}(#1)}\xspace}     
\newcommand{\leftsolutionmixq}{\ensuremath{\mathbf{R}^{(q)}_\epsilon}\xspace}           
\newcommand{\leftsolutionq}{\ensuremath{\mathbf{R}^{(q)}}\xspace}           
\newcommand{\diagonalmatrixq}{\ensuremath{\mathbf{\Lambda}^{(q)}}\xspace}           
\newcommand{\diagonalmatrixmixq}{\ensuremath{\mathbf{\Lambda}^{(q)}_\epsilon}\xspace}           
\newcommand{\diagonalmatrixperturbationq}[1][]{\ensuremath{\mathbf{\Delta}^{(q)}_{#1}}\xspace}           
\newcommand{\leftkernelq}{\ensuremath{\mathbf{L}^{(q)}}\xspace}           
\newcommand{\leftkernelperturbationq}[1][]{\ensuremath{\mathbf{H}^{(q)}_{#1}}\xspace}           
\newcommand{\leftkernelmixq}{\ensuremath{\mathbf{L}^{(q)}_\epsilon}\xspace}           
\newcommand{\scalematrixmix}[1][x]{\ensuremath{\mathbf{W}^{(q)}_{\epsilon}(#1)}\xspace}     
\newcommand{\scalematrix}[1][x]{\ensuremath{\mathbf{W}^{(q)}(#1)}\xspace}     
\newcommand{\scalematrixsimple}[1][x]{\ensuremath{\mathbf{W}(#1)}\xspace}     
\newcommand{\scalefunctionmix}{\ensuremath{\mathbf{W}^{(q)}_{\epsilon}}\xspace}     
\newcommand{\markovprocess}[1][t]{\ensuremath{J(t)}\xspace}     
\newcommand{\forceofinterest}{\ensuremath{q}\xspace}        
\newcommand{\forceofinterestmatrix}{\ensuremath{\bm{\mathcal{Q}}}\xspace}        
\newcommand{\ltm}[2][s]{\ensuremath{#2(#1)}\xspace}             
\newcommand{\ltptec}[1][s]{\ensuremath{\tilde{F}^e_{p}(#1)}\xspace}        
\newcommand{\lthtec}[1][s]{\ensuremath{\tilde{F}^e_{h}(#1)}\xspace}        
\newcommand{\ptcd}{\ensuremath{F_{p}}\xspace}        
\newcommand{\cptcd}{\ensuremath{\bar{F}_{p}}\xspace}        
\newcommand{\htcd}{\ensuremath{F_{h}}\xspace}        
\newcommand{\chtcd}{\ensuremath{\bar{F}_{h}}\xspace}        
\newcommand{\pptcd}{\ensuremath{F^{(+)}}\xspace}        
\newcommand{\mixcd}[1][\epsilon]{\ensuremath{F_{#1}^{(-)}}\xspace}        
\newcommand{\ptecd}[1][x]{\ensuremath{F^e_{p}(#1)}\xspace}        
\newcommand{\ptecds}{\ensuremath{F^e_{p}}\xspace}        
\newcommand{\ptecconv}[2][x]{\ensuremath{\big(F^e_{p}\big)^{*#2}(#1)}\xspace}        
\newcommand{\htecd}[1][x]{\ensuremath{F^e_{h}(#1)}\xspace}        
\newcommand{\htecds}{\ensuremath{F^e_{h}}\xspace}        
\newcommand{\htecdcom}[1][z]{\ensuremath{\bar{F}^e_{h}(#1)}\xspace}        
\newcommand{\simplerootq}[1][]{\ensuremath{\rho^{(\forceofinterest)}_{#1}}\xspace}  
\newcommand{\simplerootperturbationq}[1][]{\ensuremath{\delta^{(\forceofinterest)}_{#1}}\xspace}  
\newcommand{\constantfactor}{\ensuremath{A(\forceofinterest)}\xspace}        
\newcommand{\footnoteremember}[2]
{
   \newcounter{#1}\footnote{#2}\setcounter{#1}{\value{footnote}}
}
\title{Phase-type approximations perturbed by a heavy-tailed component for the Gerber-Shiu function of risk processes with two-sided jumps}
\author{
    Zbigniew Palmowski\footnoteremember{WR}{Faculty of Pure and Applied Mathematics, Wroc\l aw University of Science and Technology, ul.\ Wyb.\ Wyspia\'nskiego 27, 50-370 Wroc\l aw, Poland.}\\
    \small \texttt{zbigniew.palmowski@gmail.com}\\
    \and
    Eleni Vatamidou\footnoteremember{UNIL}{The Faculty of Business and Economics, University of Lausanne, Quartier UNIL-Chamberonne B\^{a}timent Extranef, 1015 Lausanne, Switzerland.}\\
    \small \texttt{eleni.vatamidou@unil.ch}\\
}
\date{}
\begin{document}

\maketitle

\begin{abstract}
We consider in this paper a risk reserve process where the claims and gains arrive according to two independent Poisson processes. While the gain sizes are phase\-/type distributed, we assume instead that the claim sizes are phase\-/type perturbed by a heavy\-/tailed component; that is, the claim size distribution is formally chosen to be phase\-/type with large probability $1-\epsilon$ and heavy\-/tailed with small probability $\epsilon$. We analyze the seminal Gerber\-/Shiu function coding the joint distribution of the time to ruin, the surplus immediately before ruin, and the deficit at ruin. We derive its value as an expansion with respect to powers of $\epsilon$ with known coefficients and we construct approximations from the first two terms of the aforementioned series. The main idea is based on the so\-/called fluid embedding that allows to put the considered risk process into the framework of spectrally negative Markov\-/additive processes and use its fluctuation theory developed in \cite{ivanovs2012occupation}.
\end{abstract}

\paragraph{Keywords:} Gerber-Shiu function, Heavy\-/tailed claim sizes, Phase\-/type distribution, Fluctuation theory, Approximation, Perturbation

\section{Introduction}\label{Section: Introduction}

\citeauthor{gerber1998on} \cite{gerber1998on} investigated the classical insurance risk process
\begin{equation}\label{Eq.Classical risk model}
  \levy = u+ ct - \sum_{k=1}^{\fun[t]{\npp}}\gnc[k],
\end{equation}
where $u\geq 0 $ is the initial surplus, $c$ is the premium rate, $\fun[t]{\npp}$, $t \geq 0$, is a Poisson arrival process, and $\{\gnc[k]\}_{k=1}^\infty$ is a sequence of i.i.d.\ claim sizes independent of $\fun[t]{\npp}$. For this process, they analyzed the function
\begin{equation}\label{Eq.Gerber-Shiu definition}
\gerberq := \emeasure \left[ e^{-q \ruintime}\penaltyf{\deficitruin}{\surpluspriorruin} \indfun[\{\ruintime <\infty\}]\right],
\end{equation}
  where
\begin{equation}\label{Eq.Ruin time}
\ruintime := \inf\{t\geq 0 : \levy <0\}
\end{equation}
 is the ruin time,
 \deficitruin is the {\it deficit at ruin}, \surpluspriorruin is the {\it surplus prior to ruin}, and $\emeasure \left[ \cdot \right] = \e \left[ \cdot | \levy[0] = u \right]$.
 The function  $\penaltyfunction : (0,\infty)^2 \rightarrow [0,\infty)$ is any bounded, measurable penalty function, and $q \geq 0$ is the discount factor. Here the indicator function \indfun[\{\ruintime <\infty\}] emphasizes that the penalty is exercised only when ruin occurs.
This function has found much attention since then and was given the name Gerber\-/Shiu (GS)  or discounted penalty function.

The Gerber\-/Shiu function has been widely used as an important risk measurement tool in the theory of bankruptcy, and at the same time, its existence triggered the research for many problems associated with bankruptcy. There are two main approaches allowing to identify the Gerber\-/Shiu function. The first one is based on solving appropriate (defective) renewal\-/type integro\-/differential equations, which is a consequence of the Markov property of the risk process and the fact that the GS function is harmonic with respect to its infinitesimal generator (see the discussion in \cite{lu2007expected}). The second approach is based on probabilistic methods, mainly related to martingale theory or the so\-/called scale functions in the context of L\'evy\-/type risk processes (see \cite{kyprianou-GSRT} for details).
In this paper, we choose the second method for analyzing the GS function.

Because of the importance of the GS function for risk theory, many authors have contributed to its analysis where the underlying risk reserve process has been generalized in several directions. The classical risk process (perturbed by Brownian motion) has been considered in \cite{chiu2003time, gerber1998on} (see also references therein). Later, Markov modulation (or regime switching) was added to the model; see \cite{asmussen-RP} for a beautiful overview on this topic. The more general so\-/called spectrally negative Markov\-/additive risk process (MAP) was analyzed in \cite{feng2014potential}. Other inter\-/arrival times have also been considered. The Sparre Andersen risk process (perturbed by diffusion) with Erlang inter\-/arrival times was handled e.g.\ in \cite{gerber2005time,li2005gerber}. Some extensions have been considered lately. For example, a generalized discounted penalty function additionally takes into account the last minimum of the surplus before ruin in the analysis, see e.g.\ \cite{biffis2010anote,biffis2010generalization}.

This paper deals with another natural generalization of the classical risk process \eqref{Eq.Classical risk model} that included occasional and random gains (apart from the constant premium intensity), which are modeled by additional positive jumps. That is, the risk process \levymix takes the following form:
\begin{equation}\label{Eq.Levy process with positive phase-type jumps mixture model}
  \levymix = u+ ct +  \sum_{k=1}^{\fun[t]{\ppp}}\gpc[k] - \sum_{k=1}^{\fun[t]{\nppmix}}\mixc[,k].
\end{equation}
We assume that $\gpc[k] \stackrel{d}{=}\gpc$ are i.i.d.\ random variables. Moreover, the claims \mixc[,k] and gains \gpc[k] are independent of each other, where $\epsilon \in [0,1]$ is a parameter to be explained soon. Similarly, the gain arrival process \fun[t]{\ppp} and the claim arrival process \fun[t]{\nppmix} do not depend on each other either.

Such models have already been investigated. For example,
\citeauthor{xing2008time} \cite{xing2008time} studied the ruin time and the deficit at ruin for a risk process with double\-/sided jumps
when the upward jumps are phase\-/type distributed and the downward jumps have an arbitrary
distribution.
\citeauthor{jacobsen2005time} \cite{jacobsen2005time} derived a similar result for a more general risk model, the Markov\-/modulated diffusion risk model with two\-/sided jumps. More recently,
\citeauthor{zhang2010perturbed} \cite{zhang2010perturbed} and
\citeauthor{hua2013ruin} \cite{hua2013ruin} studied a GS function for the classical risk process \eqref{Eq.Levy process with positive phase-type jumps mixture model}.  They also studied the asymptotic estimate for the probability of ruin under heavy\-/tailed claims. Finally,
\citeauthor{kolkovska2015gerber} \cite{kolkovska2015gerber} analyzed the risk process \eqref{Eq.Levy process with positive phase-type jumps mixture model} perturbed by an $\alpha$-stable motion.

We combine another feature to our model: we include in our process some heavy\-/tailed risk
in a quite specific way. We assume that our claims' umbrella consists of two kinds of claims, the so\-/called light\-/tailed (attritional) claims modeled by a phase\-/type distribution and the heavy\-/tailed (large) claims with some subexponential distribution. In daily practice, insurance companies often have a small amount of heavy\-/tailed risks in their portfolio in addition to those smaller ones. We model it by its proportion $\epsilon>0$. Thus, the general claim size
distribution \mixcd can be expressed as follows:
\begin{equation}\label{Eq.Claim distribution function}
\mixcd:=(1-\epsilon)\ptcd +\epsilon \htcd.
\end{equation}
For simplicity, we assume that \ptcd is of phase\-/type. Heavy-tailed risk is very dangerous for the insurance industry. Indeed, insurers quite often may become insolvent or experience financial
distress due to large claims from catastrophic events and other large losses from financial investments. Such a study has become particularly relevant for insurance because of modern regulatory frameworks (e.g.\ EU Solvency) that require insurers to hold solvency capital so that the ruin probability is under control.

Following \cite{olvera2011transition} in this paper, we want to identify the impact of the heavy-tailed risk on the GS function, hence on the default probability,
when its proportion $\epsilon$ tends to $0$, thus is very small. One can also treat this result as an approximation procedure for the GS function of the process \eqref{Eq.Levy process with positive phase-type jumps mixture model} with phase\-/type upward gains and claims of the form \eqref{Eq.Claim distribution function}. We manage to prove that
\begin{equation*}
\gerbermixq = \gerberq + \epsilon h(u,\forceofinterest) + O(\epsilon^2),
\end{equation*}
where $h(u,\forceofinterest)$ can be found explicitly in terms of the model parameters.

The main idea is to use the so\-/called fluid embedding technique in which the upward jumps are placed at a slope one.
This path transformation makes the risk process upward\-/continuous similarly to the classical risk process.
Note that the original and transformed processes both get ruined over infinite time horizon or both survive but, obviously, this embedding affects the time scale. Since any upward jump has a phase\-/type distribution, which is the lifetime of some defective Markov chain, the modified risk process will move upwards with slope one as long as the Markov chain is still alive. Thus, we can treat any time of going upwards as the time of state changes of a certain Markov chain until its killing time. After this crucial path transformation, the risk process falls in the class of MAPs. We can then use the fluctuation theory for the latter developed in \cite{ivanovs2012occupation} based on scale matrices.
As a next step, we expand these scale matrices with respect to $\epsilon$ and derive our main result.

The paper is organized as follows.
In \Cref{Section: Presentation of the model}, we describe our model.
\Cref{Section: Fluid embedding and the Gerber-Shiu function for MAPs} focuses on fluid embedding of upward jumps.
In \Cref{Section: Perturbation of the model parameters}, we expand the scale matrices and other model parameters with respect to the above mentioned $\epsilon$.
\Cref{Section: Corrected approximation for the Gerber-Shiu function} gives the main result of this paper. Finally, we conclude in \Cref{Section: Conslusions}.

\section{Presentation of the model}\label{Section: Presentation of the model}
Let \levymix, $t \geq 0$, be a stochastic process defined on the probability space $(\Omega,\mathcal{F},\mathbb{P})$ that is equipped with a right\-/continuous complete filtration $\{\mathcal{F}_t\}_{t\geq 0 }$. We assume that \levymix represents the risk reserve process of an insurance company and we allow both for negative and positive claims, where the latter correspond to capital injections (gains). The gains \gpc arrive according to a Poisson process \ppp with rate \parrate and the claims \mixc arrive according to a Poisson process \nppmix with rate \narrate. Finally, we assume that the positive safety loading condition $c + \parrate \e \gpc - \narrate \e \mixc >0$ holds. Thus, \levymix takes the form \eqref{Eq.Levy process with positive phase-type jumps mixture model}, which was described briefly in \Cref{Section: Introduction}.

In our model, we assume that $\gpc[k] \stackrel{d}{=}\gpc$ are i.i.d.\ random variables with jump size distribution \pptcd of phase\-/type with \pphases phases and representation $\big( \pinp,\pintm \big)$; see \cite{asmussen-RP}. Moreover, the claims $\mixc[,k]\stackrel{d}{=}\mixc$ are i.i.d.\ random variables with jump size distribution \mixcd. Motivated by statistical analysis, which proposes that only a small fraction of the upper\-/order statistics is relevant for estimating tail probabilities, we consider that an arbitrary claim size \mixc is phase\-/type with probability $1-\epsilon$ and heavy\-/tailed with probability $\epsilon$, where $\epsilon \rightarrow 0$; see \cite{embrechts-MEE}. We assume that these phase\-/type claim sizes $\ptc[i] \stackrel{d}{=} \ptc$ and the heavy\-/tailed claim sizes $\htc[i] \stackrel{d}{=} \htc$ have both finite means, \mean[p] and \mean[h], respectively, are absolutely continuous, and we denote their corresponding distributions as \ptcd and \htcd.

\begin{remark}
Phase\-/type distributions are a computational vehicle for much of modern applied probability.
Recall that a distribution $F$ is said to be of phase\-/type if it is the same distribution
as the lifetime of some terminating Markov process starting from the vector $\boldsymbol{\alpha}$, having finitely many states, say $N$, and having time homogeneous transition rates with subintensity $\mathbf{T}$.
Namely,
$F(\phi)=1-\boldsymbol{\alpha}\exp\{\mathbf{T}\phi\} \unitcolumn$,
for $\phi\geq 0$, where \unitcolumn is a column\-/vector of ones.
This gives also the form of the density $f(\phi)$ of $F$ as
$f(\phi)=\boldsymbol{\alpha}\exp\{\mathbf{T}\phi\}\mathbf{t}$,
where
$\mathbf{t}=-\mathbf{T} \unitcolumn$.
Many well\-/known distributions are of phase\-/type; for example, the exponential distribution with intensity $\rho$ is PH in this case with $N=1$, $\mathbf{T}=\rho$, and $\boldsymbol{\alpha}=1$. Other more involved distributions that belong to the class of phase\-/types are the hyperexponential, the Erlang, and the Coxian; see \cite{asmussen-RP} for a detailed description.
\end{remark}

For the above mentioned model, we are interested in evaluating the so\-/called Gerber\-/Shiu or expected discounted penalty function. If the time to ruin is defined as $\ruintimemix := \inf\{t\geq 0 : \levymix<0\}$, then we have the following definition from \cite{gerber1998on}.

\begin{definition}
The Gerber\-/Shiu or expected discounted penalty function is defined as
  \begin{equation}
      \gerbermixq := \emeasure \left[ e^{-\forceofinterest \ruintimemix}\penaltyf{\deficitruinmix}{\surpluspriorruinmix} \indfun[\{\ruintimemix<\infty\}]\right],
  \end{equation}
 where \deficitruinmix is the {\it deficit at ruin}, \surpluspriorruinmix is the {\it surplus prior to ruin}, and $u\geq 0 $ is the initial surplus. The function  $\penaltyfunction : (0,\infty)^2 \rightarrow [0,\infty)$ is any bounded, measurable function, and $\forceofinterest \geq 0$, is the discount factor. Here the indicator function \indfun[\{\ruintimemix<\infty\}] emphasizes that the penalty is exercised only when ruin occurs.
\end{definition}

Assume from now on that $q>0$. In the absence of positive jumps, the model belongs to the class of spectrally negative L\'{e}vy\-/processes, i.e.\ it is a stochastic process with c\`{a}dl\`{a}g (right\-/continuous with left limits) sample paths, and stationary and independent increments, and the Gerber\-/Shiu function takes the form \cite[p.43]{kyprianou-GSRT}
\begin{equation}\label{Eq. Gerber-Shiu for spectrally negative Levy process}
  \gerbermixq =
            \narrate \int_{[0,\infty)} \int_{[0,\infty)} \penaltyf{y}{z} \densityresolventq \mixcd(z + dy)dz,
\end{equation}
where \densityresolventq is known as the \forceofinterest-resolvent or potential measure of \levymix killed on exiting the positive half\-/line.
In the case of existing upward jumps, we have to place our risk process into the framework of spectrally negative MAPs via fluid embedding. The spectral negativity means that our MAP has no positive jumps. In this way, we construct a new process \levymixnew governed by an environmental process \jumpprocess
for which we will apply an identity similar to \Cref{Eq. Gerber-Shiu for spectrally negative Levy process}. We explain in the next section how to construct an equivalent spectrally negative MAP and we summarize the respective necessary results for the evaluation of the Gerber\-/Shiu function.

\section{Fluid embedding and the Gerber-Shiu function for MAPs}\label{Section: Fluid embedding and the Gerber-Shiu function for MAPs}

When in a model the positive jumps are phase\-/type, a common approach is to spread them out as a succession of linear pieces of unit slope as  it was described e.g.\ in \cite{badescu2005surplus,breuer2011generalised}; see \Cref{Fig. Fluid embedding}. This procedure, which is called fluid embedding, requires adding supplementary states to the background process as many as the phases; namely \pphases in our model. By applying this technique, we construct an auxiliary spectrally negative MAP \levymixnew with an enlarged state space, say $\statespace = \{1,2,\dots,\pphases~+~1~\}$. The 1st state corresponds to the original risk process with negative jumps only. The other states correspond to the upward jumps transformed into lines. For the latter states, we have $d\levymixnew =dt$. In addition, we denote by \jumpprocess the right\-/continuous jump process that lives on \statespace.

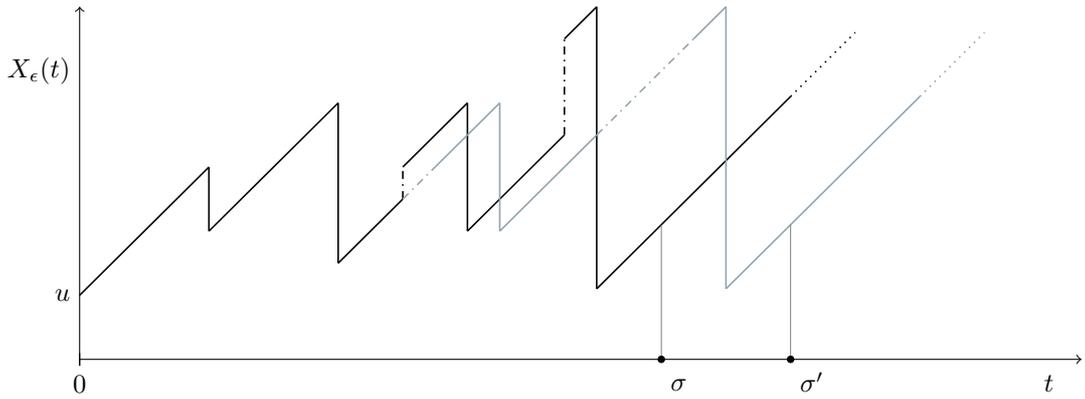
\begin{figure}
\centering
\begin{tikzpicture}[scale=0.85]
\draw[->] (0,0) -- (15.5,0);
\draw[->] (0,-0.1) -- (0,5.5);
\node [below] at (15 , -0.1) {$t$};
\node [left] at (0,4.5) {\levymix};
\node [left] at (0,1) {$u$};

\draw (0 , -0.1) -- (0 , 0.1);
\node [below] at (0 , -0.1) {$0$};

\draw [semithick] (0,1) -- (2,3);
\draw [semithick] (2,3) -- (2,2);
\draw [semithick] (2,2) -- (4,4);
\draw [semithick] (4,4) -- (4,1.5);
\draw [semithick] (4,1.5) -- (5,2.5);

\draw [semithick,dash dot] (5,2.5) -- (5,3);
\draw [semithick] (5,3) -- (6,4);
\draw [semithick] (6,4) -- (6,2);
\draw [semithick] (6,2) -- (7.5,3.5);
\draw [semithick,dash dot] (7.5,3.5) -- (7.5,5);
\draw [semithick] (7.5,5) -- (8,5.5);
\draw [semithick] (8,5.5) -- (8,1.1);
\draw [semithick] (8,1.1) -- (11,4.1);
\draw [semithick,dotted] (11,4.1) -- (12,5.1);
\draw [help lines] (9,2.1) -- (9,0);
\draw [fill] (9,0) circle [radius=0.05];
\node [right] at (9, -0.4) {$\sigma$};

\draw [semithick,phase,dash dot] (5,2.5) -- (5.5,3);
\draw [semithick,phase] (5.5,3) -- (6.5,4);
\draw [semithick,phase] (6.5,4) -- (6.5,2);
\draw [semithick,phase] (6.5,2) -- (8,3.5);
\draw [semithick,phase,dash dot] (8,3.5) -- (9.5,5);
\draw [semithick,phase] (9.5,5) -- (10,5.5);
\draw [semithick,phase] (10,5.5) -- (10,1.1);
\draw [semithick,phase] (10,1.1) -- (13,4.1);
\draw [semithick,phase,dotted] (13,4.1) -- (14,5.1);
\draw [help lines] (11,2.1) -- (11,0);
\draw [fill] (11,0) circle [radius=0.05];
\node [right] at (11, -0.35) {$\sigma'$};
\end{tikzpicture}
\caption{Sample path of a MAP with two\-/sided jumps (black line), where the positive phase\-/type jumps (dashed\-/dotted line) are replaced by linear stretches of slope 1, resulting in a spectrally\-/negative MAP with an augmented state\-/space (gray line). Fluid embedding shifts time $\sigma$ in the initial MAP to $\sigma'$ in the spectrally\-/negative MAP.}
\label{Fig. Fluid embedding}
\end{figure}

Note that for the process in \Cref{Eq.Levy process with positive phase-type jumps mixture model}, jumps are coming according to a superposition of the two independent Poisson processes \fun[t]{\ppp} and \fun[t]{\nppmix}. Hence, we obtain \Cref{Eq. Gerber-Shiu for spectrally negative Levy process} from the compensation formula (see \cite[Theorem~3.4]{kyprianou-GSRT}) because ruin can occur only when \fun[t]{\nppmix} increases, i.e.\  when a claim arrives, and thus only \narrate and \mixcd appear in the equation.
In addition, we know from \cite[p.39]{kyprianou-GSRT} that $\displaystyle \densityresolventq = \frac{1}{\forceofinterest dz} \pr_u \big( \levymix[e_\forceofinterest] \in dz, \inf_{w\leq e_\forceofinterest} \levymix[w] \geq 0 \big)$, where $e_\forceofinterest$ is an exponential random variable with parameter $\forceofinterest >0$ and independent of \levymix, while $\pr_u (\cdot) = \pr \big( \cdot | \levymix[0] = u \big)$. Note that for fixed $u$, the resolvent \densityresolventq should be treated as a Radon\-/Nikodym derivative of $\displaystyle \frac{1}{\forceofinterest dz} \pr_u \big( \levymix[e_\forceofinterest] \in dz, \inf_{w\leq e_\forceofinterest} \levymix[w] \geq 0 \big)$ with respect to the Lebesgue measure. Let now \densityresolventmatrixmixq be the \forceofinterest-resolvent of the process $\big( \levymixnew, \jumpprocess \big)$ constructed with fluid embedding killed on exiting from the positive half\-/line, i.e.
\begin{equation}\label{resolvent}
 \densityresolventmatrixmixq:=\int_0^\infty e^{- \forceofinterest t} \pr_u \big( \levymixnew \in dz, \inf_{w\leq t} \levymixnew[w] \geq 0, \jumpprocess \big)dt/dz,
\end{equation}
where for a random variable $L$ we write $\emeasure[u] [L; \jumpprocess ]$ to denote a
matrix with $ij$-th element
\begin{equation*}
\emeasure[u,i][L; \jumpprocess = j] = \e \big[ {L \indfun[\{\jumpprocess = j\}] \bigm\vert \levymixnew[0] = u, \jumpprocess[0] = i} \big],
\end{equation*}
and $\pr_u \big( A, \jumpprocess \big)=\emeasure[u] [\indfun[A]; \jumpprocess]$; see also \cite{feng2014potential}. Observe that all states in \statespace are fictitious for \levymix except for state 1. Moreover, stopping the process \levymix[e_\forceofinterest] at the exponential clock is possible only when this process has no jump at that moment $e_\forceofinterest$. That is, in particular $e_\forceofinterest$ cannot happen at the moment of a positive jump. Thus, if we want to stop the process \levymixnew at an exponential epoch $e_\forceofinterest$ at the same level as \levymix[e_\forceofinterest], then we should condition on the event that the state \jumpprocess[e_\forceofinterest] is equal to 1, since we take into account only the scenarios of stopping that happen at state 1. As a result, if $\constantfactor = 1/\pr_{u,1}\big( \jumpprocess[e_\forceofinterest]=1 \big)$, it follows that $\displaystyle \densityresolventq = \frac{1}{\forceofinterest dz} \pr_{u,1} \big( \levymixnew[e_\forceofinterest] \in dz| \jumpprocess[e_\forceofinterest]=1 \big) = \constantfactor \densityresolventmatrixmixq_{(1,1)}$ due to the lack of memory of the exponential distribution. Consequently, the GS function takes the next form after fluid embedding
\begin{equation}\label{Eq. Gerber-Shiu for spectrally negative Levy process simplified}
  \gerbermixq =
           \constantfactor \narrate \int_{[0,\infty)} \int_{[0,\infty)} \penaltyf{y}{z}  \densityresolventmatrixmixq_{(1,1)}
 \mixcd(z + dy)dz.
\end{equation}

Recall that the process $\big( \levymixnew, \jumpprocess \big)$ is a MAP if the pair $\big( \levymixnew[t+s]-\levymixnew,\jumpprocess[t+s] \big)$ given $\{\jumpprocess=i\}$ is independent of the natural history $\mathcal{F}_t$ up to time $t$ and has the same law as $\big( \levymixnew[s]-\levymixnew[0],\jumpprocess[s] \big)$ given $\{\jumpprocess[0]= i\}$, for all $s,t\geq 0$ and $i\in \statespace$. It is common to say that \levymixnew is an additive component and \jumpprocess is a background process
representing the environment. Importantly, MAPs have a very special structure, which we reveal in the following. It is immediate that \jumpprocess is a Markov chain. Furthermore, \levymixnew evolves as some L\'{e}vy process \levymixcase{i} while $\jumpprocess=i$.
In addition, a transition of \jumpprocess from $i$ to $j\neq i$ triggers a jump of \levymixnew. All the above components are independent.

Since \levymixnew has no positive jumps, it is uniquely characterized by the matrix exponent \mfmix, $s\geq 0$, which satisfies 
\begin{equation}\label{e4.matrix laplace exponent of the auxiliary MAP}
  \e\left[ e^{s \levymixnew}; \jumpprocess \right] = e^{\mfmix t}, \quad t\geq 0.
\end{equation}
See \cite{asmussen-RP} for all details.
If \ltptec and \lthtec are the Laplace\-/Stieltjes transforms (LSTs) of the stationary\-/excess claim size distributions \ptecd and \htecd (\eptc and \ehtc are the corresponding stationary\-/excess claim sizes), respectively, then from the above described fluid embedding, it follows that the matrix exponent \mfmix takes the form
\begin{equation}\label{e4.matrix exponent mixture model}
  \mfmix = \left[
           \begin{array}{c|c}
                \lapexpmix - \parrate        & \parrate \pinp \\ \hline
                \pexr                        & \pintm + s \im
           \end{array}
           \right],
\end{equation}
where
\begin{equation}\label{e4.laplace exponent of the mixture model}
  \lapexpmix = cs  - \narrate s \big((1-\epsilon) \mean[p] \ltptec + \epsilon \mean[h] \lthtec \big),
\end{equation}
and \im is the identity matrix of appropriate dimension.
Note that $\constantfactor =\big[ \forceofinterest \big( \forceofinterest \im-\mfmix[0] \big)^{-1}_{(1,1)}\big]^{-1}$.

\begin{remark}
  Observe that \lapexpmix corresponds to the L\'{e}vy exponent of \levymix in case the rate of the capital injections \parrate is equal to zero; namely when there exist only negative jumps. Moreover, note that \mfmix[0] is the transition rate matrix of \jumpprocess.
\end{remark}

From \Cref{Eq. Gerber-Shiu for spectrally negative Levy process simplified}, it is clear that we need to evaluate the \forceofinterest-resolvent measure for \levymixnew, which is known to be expressed in terms of the so\-/called \forceofinterest-scale matrices \cite{ivanovs2012occupation}. For this reason, we provide first its formal definition.

\begin{proposition}
  There exists a unique continuous function $\scalefunctionmix:[0,\infty) \rightarrow \mathds{R}^{(\pphases+1) \times (\pphases+1)}$, such that \scalematrixmix is invertible for all $x>0$,
  \begin{equation}\label{Eq. Definition of scale matrix for the mixture model}
    \int_0^{\infty} e^{-s x}\scalematrixmix dx = \big(\mfmix - \forceofinterestmatrix \big)^{-1},
  \end{equation}
  for all $s >\eta(\forceofinterest) = \max \{\Re(z):z \in \mathds{C}, \det\big(\mfmix - \forceofinterestmatrix \big) = 0\}$, where $\forceofinterestmatrix = \diag \{ \forceofinterest,0,\dots,0 \}$.
\end{proposition}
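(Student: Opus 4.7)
The plan is to reduce the claim to the general theory of $\forceofinterest$-scale matrices for spectrally negative MAPs developed in \cite{ivanovs2012occupation}, applied to the fluid-embedded process $\big(\levymixnew,\jumpprocess\big)$ constructed in \Cref{Section: Fluid embedding and the Gerber-Shiu function for MAPs}. I would first verify that this MAP satisfies the hypotheses of that theory: it has no positive jumps by construction, its matrix exponent \mfmix is analytic on a right half-plane (because the LSTs \ltptec and \lthtec are finite for $s\geq 0$ under the finite-mean assumption on the claim excess distributions), and the background chain \jumpprocess is irreducible on \statespace, since state $1$ feeds each auxiliary phase $i\in\{2,\dots,\pphases+1\}$ at rate $\parrate\alpha^{(+)}_i$ and each auxiliary phase returns to $1$ almost surely by the sub-stochastic nature of \pintm.

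Second, I would define $\eta(\forceofinterest)$ and construct \scalefunctionmix by Laplace inversion. The determinant $\det\big(\mfmix-\forceofinterestmatrix\big)$ is analytic in $s$ on the right half-plane, and from the block structure in \eqref{e4.matrix exponent mixture model} one reads off that its modulus tends to infinity as $\Re(s)\to\infty$ (the $(1,1)$ block grows like $cs$ while the lower block is dominated by $sI$). Hence only finitely many zeros can lie above any given real bound, so $\eta(\forceofinterest)$ is finite. For $\Re(s)>\eta(\forceofinterest)$ the entries of $\big(\mfmix-\forceofinterestmatrix\big)^{-1}$ are analytic in $s$ and decay like $O(1/s)$ along vertical lines, which is enough regularity to define
\[
\scalematrixmix \;=\; \frac{1}{2\pi i}\int_{a-i\infty}^{a+i\infty} e^{sx}\big(\mfmix-\forceofinterestmatrix\big)^{-1}\,ds,\qquad a>\eta(\forceofinterest),
\]
as a continuous $(\pphases+1)\times(\pphases+1)$ matrix-valued function on $[0,\infty)$, independently of the chosen $a$. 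Uniqueness of \scalefunctionmix follows from the injectivity of the Laplace transform on continuous matrix-valued functions of exponential order.

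The main obstacle will be the invertibility of \scalematrixmix for every $x>0$, which is not visible from the transform identity alone. For this I would invoke the probabilistic decomposition from \cite{ivanovs2012occupation} available in the spectrally negative setting: on $(0,\infty)$ one has $\scalematrixmix = e^{-\diagonalmatrixmixq x}\leftkernelmixq$, where \diagonalmatrixmixq encodes the descending ladder heights of $\big(\levymixnew,\jumpprocess\big)$ killed according to \forceofinterestmatrix and \leftkernelmixq is a two-sided exit matrix for the interval $(0,x)$. Since the matrix exponential is invertible for any finite $x$, invertibility of \scalematrixmix reduces to invertibility of \leftkernelmixq, which in turn follows from the irreducibility of \jumpprocess, the unit upward drift in every auxiliary phase, and the absence of positive jumps: together these guarantee that from every initial phase the MAP can exit $(0,x)$ upwards in every prescribed terminal phase before being killed with positive probability, so the relevant exit matrix is of full rank.
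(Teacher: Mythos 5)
Your proof takes essentially the same route as the paper: the paper's own argument is simply to observe, following \cite{breuer2011generalised}, that the auxiliary phase states $2,\dots,\pphases+1$ are fictitious ``time'' spread out from jumps and therefore should not be discounted (whence $\forceofinterestmatrix=\diag\{q,0,\dots,0\}$ rather than $q\im$), and then to invoke \cite[Theorem~1]{ivanovs2012occupation} directly, which already delivers existence, uniqueness, invertibility and the transform identity for spectrally negative MAPs with state-dependent killing. Your verification of the hypotheses (spectral negativity, irreducibility of \jumpprocess, analyticity of \mfmix) is exactly the kind of bookkeeping the one-line citation glosses over, so this is a legitimate expansion.

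One small slip in your invertibility argument: the probabilistic factorization from \cite[Theorem~1]{ivanovs2012occupation} reads $\scalematrixmix = e^{-\Lambda^{(q)}x}\,L(x)$, where $L(x)$ is the \emph{$x$-dependent} matrix of expected occupation densities at the origin before first passage above $x$, and $L(x)\to\leftkernelmixq$ only as $x\to\infty$; writing the second factor as the constant matrix \leftkernelmixq while simultaneously describing it as an exit matrix for $(0,x)$ conflates the two objects. The invertibility conclusion is still right — it is $L(x)$, not the limit matrix, whose full rank for $x>0$ follows from irreducibility, the unit upward drift in the auxiliary phases and the absence of upward jumps — but the object in the factorization should be named consistently. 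Also, the Bromwich-inversion construction of \scalefunctionmix is an analytic detour you do not actually need: Ivanovs--Palmowski already provide a probabilistic construction of the scale matrix and prove the transform identity, so once the hypotheses are checked, existence and uniqueness come for free.
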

\begin{proof}
  Following \cite{breuer2011generalised}, we do not discount the time for the states $2,\dots,\pphases+1$ in order to account for the jump nature of the new phase\-/type upward movements. The result is then a direct application of \cite[Theorem~1]{ivanovs2012occupation}.
\end{proof}

For simplicity, we set $\mfmixq = \mfmix - \forceofinterestmatrix$. By using Theorem~4.8 of \cite{ivanovs}, one can immediately deduct that the equation $\det\mfmixq =0$ has $\pphases+1$ zeros with positive real part if $\forceofinterest>0$. Contrarily, for $\forceofinterest=0$ the number of zeros in $\mathds{C}^{\Re>0}$ are in number \pphases and zero is also a root. The latter is obvious since the first column of matrix \mfmix[0] is a linear combination of all its remaining columns. The solutions of $\det\mfmixq =0$ are called eigenvalues of the matrix \mfmixq. We assume here that these eigenvalues are simple; namely their multiplicities are all equal to one.

Following \cite{feng2014potential,ivanovs2014potential}, we define the matrix \leftsolutionmixq as the unique left solution to the matrix equation $\mfmixq[\leftsolutionmixq] = \zeromatrix$. This matrix is positive stable, i.e.\ all its eigenvalues have positive real part, and it is also diagonizable. More details regarding its construction will be provided in \Cref{Section: The perturbed matrix R}.

In the next lemma, we provide the exact expression for the \forceofinterest-resolvent measure of \levymixnew.

\begin{lemma}\label{Lemma. Resolvent ID}
  Fix $\forceofinterest> 0$. The \forceofinterest-resolvent measure for \levymixnew killed on exiting $[0,\infty)$ has density given by
  \begin{equation}\label{Eq.Resolvent matrix equation}
    \densityresolventmatrixmixq = \scalematrixmix[u] e^{-\leftsolutionmixq z} - \scalematrixmix[u-z],
  \end{equation}
  for all $u,z\geq 0$.
\end{lemma}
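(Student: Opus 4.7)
The plan is to verify \eqref{Eq.Resolvent matrix equation} by computing the Laplace transform in $z$ of both sides and matching them; this follows the strategy in the proof of Theorem~1 of \cite{ivanovs2012occupation}, which establishes the analogous identity for general spectrally-negative MAPs (to which class $(\levymixnew,\jumpprocess)$ belongs by construction in \Cref{Section: Fluid embedding and the Gerber-Shiu function for MAPs}).

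On the right-hand side, the positive stability of $\leftsolutionmixq$ gives $\int_0^\infty e^{-sz} e^{-\leftsolutionmixq z}\,dz = (s\im+\leftsolutionmixq)^{-1}$ for $s$ in a suitable right half-plane. Combined with the convention $\scalematrixmix \equiv \zeromatrix$ on the negative half-line, which yields $\int_0^\infty e^{-sz}\scalematrixmix[u-z]\,dz = \int_0^u e^{-sz}\scalematrixmix[u-z]\,dz$, one obtains a closed expression for the transform of $\scalematrixmix[u] e^{-\leftsolutionmixq z} - \scalematrixmix[u-z]$.

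On the left-hand side, Fubini applied to \eqref{resolvent} shows that the $(i,j)$-entry of the transform equals
\[
\emeasure[u,i]\!\left[\int_0^{\ruintimemix} e^{-\forceofinterest t - s\levymixnew}\indfun[\{\jumpprocess=j\}]\,dt\right],
\]
i.e., the killed Laplace resolvent of $\levymixnew$ on $[0,\infty)$. Splitting paths at the first down-crossing time $\ruintimemix$ via the strong Markov property and using the absence of positive jumps decomposes this quantity into an unkilled Laplace resolvent plus a boundary correction arising from the downcrossing (overshoot) measure; the upcrossing side of the Wiener--Hopf factorisation is encoded by the matrix identity $\emeasure[0][e^{-\forceofinterest \tau_z^+};\jumpprocess[\tau_z^+]] = e^{-\leftsolutionmixq z}$, which is the standard probabilistic interpretation of the root $\leftsolutionmixq$ of $\mfmixq[\cdot]=\zeromatrix$ under spectral negativity.

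The principal obstacle is collapsing the downcrossing correction together with the unkilled resolvent into the clean form $\scalematrixmix[u]e^{-\leftsolutionmixq z} - \scalematrixmix[u-z]$. For this step I would use the two algebraic facts already established: the integral characterisation $\int_0^\infty e^{-sx}\scalematrixmix\,dx = (\mfmixq)^{-1}$ from \eqref{Eq. Definition of scale matrix for the mixture model}, and the root equation $\mfmixq[\leftsolutionmixq]=\zeromatrix$; these are precisely the two ingredients required to absorb the boundary correction. If the direct Laplace matching turns out to be unwieldy, an alternative is to apply a Kella--Whitt-type martingale to $(\levymixnew,\jumpprocess)$ stopped at $\ruintimemix$ and extract the identity from optional sampling, which bypasses the explicit Wiener--Hopf manipulations altogether.
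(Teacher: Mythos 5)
The paper's proof is a single citation: the identity is Theorem~1, Equation~(14) of \cite{ivanovs2014potential}, specialised to $a=u$ and $x=z-u$ (it also appears in \cite{feng2014potential}), which applies directly because $(\levymixnew,\jumpprocess)$ is a spectrally-negative MAP with the diagonal killing matrix \forceofinterestmatrix. Your sketch instead re-derives the identity from scratch via Laplace matching in $z$ and a strong-Markov split. This is a sound strategy --- it is essentially how the cited results are themselves proved --- but it redoes work the paper simply imports, and as you concede your ``principal obstacle'' paragraph leaves the collapse to the clean form as a plan rather than a finished argument. Two specific cautions about the plan. First, Theorem~1 of \cite{ivanovs2012occupation} is the two-sided exit identity and occupation-density characterisation of the scale matrix, not the one-sided killed resolvent on $[0,\infty)$; the result whose statement actually delivers the lemma is \cite[Theorem~1]{ivanovs2014potential}. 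Second, the intended killing here is state-dependent, $\forceofinterestmatrix = \diag\{\forceofinterest,0,\dots,0\}$, so time is not discounted in the fictitious phase states (following \cite{breuer2011generalised}); accordingly the factors $e^{-\forceofinterest\tau^+_z}$ and $e^{-\forceofinterest t}$ in your outline should carry the additive-functional discount $\exp\bigl\{-\forceofinterest\int_0^{\,\cdot\,}\mathds{1}\{J_\epsilon(s)=1\}\,ds\bigr\}$, which is exactly what replaces $\mfmix-\forceofinterest\im$ by $\mfmix-\forceofinterestmatrix$ in \eqref{Eq. Definition of scale matrix for the mixture model}. With those adjustments, your two closing ingredients --- the transform $\int_0^\infty e^{-sx}\scalematrixmix\,dx=(\mfmixq)^{-1}$ and the root equation $\mfmixq[\leftsolutionmixq]=\zeromatrix$ --- are indeed the right ones, and carrying the computation through would reproduce the proof of \cite[Theorem~1]{ivanovs2014potential}.
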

\begin{proof}
  The result is immediate from \cite[Theorem~1]{ivanovs2014potential}, by taking in Equation (14) $a=u$ and $x=z-u$, since we have here $\levymix[0] =u$. The above resolvent identity appears also in \cite{feng2014potential}.
\end{proof}

A direct consequence of \Cref{Lemma. Resolvent ID} is that the GS function in \Cref{Eq. Gerber-Shiu for spectrally negative Levy process simplified} further simplifies to
\begin{equation}\label{Eq. Gerber-Shiu for spectrally negative MAP}
  \gerbermixq =
            \constantfactor \narrate \int_{[0,\infty)} \int_{[0,\infty)} \penaltyf{y}{z} \Big(\scalematrixmix[u] e^{-\leftsolutionmixq z} - \scalematrixmix[u-z] \Big)_{(1,1)} \mixcd(z + dy)dz.
\end{equation}

Although the computational efforts have simplified significantly, it is still not easy to find a closed\-/form expression for the Gerber\-/Shiu function in \Cref{Eq. Gerber-Shiu for spectrally negative MAP}. The problem arises from the fact that \lthtec, which is the Laplace transform of a heavy\-/tailed distribution, appears in every element of the matrix $\big(\mfmix - \forceofinterestmatrix \big)^{-1}$ (specifically at the denominator, but we omit the details at this point), thus making Laplace inversion to evaluate the scale matrix difficult, if not impossible. However, if \lthtec was missing, then $\big(\mfmix - \forceofinterestmatrix \big)^{-1}$ would be a matrix of rational functions in $s$, and the evaluation of the scale matrix would be straightforward. Using this key idea, we explain in the next section how we can exploit that the claim size distribution is a mixture of \ptcd and \htcd and apply perturbation analysis to derive a series expansion for the Gerber\-/Shiu function. The goal is then to use the series expansion and construct accurate approximations for the measure under consideration.

\section{Perturbation of the model parameters}\label{Section: Perturbation of the model parameters}
Recall that the claim size distribution \mixcd is a mixture of a phase\-/type distribution \ptcd and a heavy\-/tailed one \htcd, i.e.\
$\mixcd(x) = (1-\epsilon) \ptcd(x) + \epsilon \htcd(x)$, $x \geq 0$. As mentioned in the previous section, if there were not any heavy\-/tailed jumps, for example if $\mixcd(x) = \ptcd(x) $, $x \geq 0$, then the algebraic calculations would reduce to the Laplace inversion of rational functions. Now, notice that
\begin{equation}\label{Eq. Perturbation of the claim size distribution}
\mixcd(x) = (1-\epsilon) \ptcd(x) + \epsilon \htcd(x)
= \ptcd(x) + \epsilon \big( \htcd(x) - \ptcd(x) \big),
\end{equation}
which means that the claim size distribution \mixcd can be regarded as perturbation of the distribution $\ptcd(x)$ by the function $\epsilon \big( \htcd(x) - \ptcd(x) \big)$. Consequently, we could calculate the Gerber\-/Shiu function with the claim size distribution $\ptcd(x) $ and then use perturbation analysis in order to derive a series expansion (or first order approximation) in $\epsilon$ for \gerbermixq.

Observe that perturbation on the claim size distribution affects also the scale matrix \scalematrixmix[u] as well as the matrix \leftsolutionmixq. Therefore, we first introduce the notation for the model with claim size distribution $\ptcd(x)$, which is obviously our phase\-/type base model, and then explain how to derive the parameters of the original model as perturbation of the parameters of the phase\-/type base model.

\subsection{Phase-type base model}\label{Section: Phase-type base model}
Setting $\epsilon = 0$ in \Cref{Eq. Perturbation of the claim size distribution} allows us to retrieve $\mixcd(x) = \ptcd(x)$. As we can see, the resulting mixture $\mixcd(x)$ is independent of the parameter $\epsilon$, and so are all the other parameters of this base model. Thus, to avoid overloading the notation, we omit the subscript ``0'' (which is a consequence of the fact that $\epsilon = 0$) from the parameters of the phase\-/type base model, which we call {\it base model} for simplicity. The Laplace exponent of the base model is
\begin{equation}
  \mfq = \left[
           \begin{array}{c|c}
                \lapexp - \parrate  -q    & \parrate \pinp \\ \hline
                \pexr                        & \pintm + s \im
           \end{array}
           \right],
\end{equation}
where
\begin{equation}
  \lapexp = cs - \narrate s \mean[p] \ltptec.
\end{equation}
Setting $\mxkelement{} = \narrate \big( \mean[h] \lthtec - \mean[p] \ltptec \big) $ for simplicity, we observe that $\lapexpmix = \lapexp - \epsilon s \mxkelement{}$ and that $\mfmixq = \mfq - \epsilon \mxk$, where
\begin{equation}
  \mxk = \left[
               \begin{array}{c|c}
                     s \mxkelement{}      & \vect{0}\\ \hline
                    \vect{0}                           & \vect{0}
               \end{array}
               \right].
\end{equation}

In the following sections, we explain how to find the scale matrix \scalematrixmix and the matrix \leftsolutionmixq as perturbation of the corresponding matrices \scalematrix and \leftsolutionq in the base model. We start with \scalematrixmix.

\subsection{The perturbed scale matrix}\label{Section: The perturbed scale matrix}

According to \Cref{Eq. Definition of scale matrix for the mixture model}, the scale matrix \scalematrixmix is defined as the Laplace inversion of the matrix $\big(\mfmixq\big)^{-1}$, the series expansion of which is given in the following line:
\begin{align}\label{Eq.Perturbed inverse}
  \big(\mfmixq\big)^{-1} &= \big(\mfq - \epsilon \mxk \big)^{-1}
                        = \sum_{n=0}^{\infty} \epsilon^n \Big(\big(\mfq\big)^{-1} \mxk \Big)^n \big(\mfq\big)^{-1}.
\end{align}

Note that the matrix \mxk is everywhere 0 except for its $(1,1)$ element. Thus, only the 1st row and 1st column of $\big( \mfq \big)^{-1}$ are involved in the series expansion \eqref{Eq.Perturbed inverse}. If \adjointmfq is the adjoint matrix of \mfq, then it is known that $\big( \mfq \big)^{-1} = \adjointmfq / \det \mfq$. Let \adjointmfelementq be the $(i,j)$ element of the matrix \adjointmfq. Hence \Cref{Eq.Perturbed inverse} takes the form
\begin{align}\label{Eq.Series expansion perturbed inverse}
\big(\mfmixq\big)^{-1} =& \big(\mfq\big)^{-1}
+ \sum_{n=1}^\infty \epsilon^n s^n \mxkelement{n} \frac{ \Big( \det\big( \pintm + s \im \big) \Big)^{n-1}}{ \Big( \det \mfq \Big)^{n+1}} \notag \\
& \times
\begin{pmatrix}
\adjointmfelementq[11]\adjointmfelementq[11]	&\adjointmfelementq[11]\adjointmfelementq[12]	&\cdots	&\adjointmfelementq[11]\adjointmfelementq[1,\pphases+1]\\
\adjointmfelementq[21]\adjointmfelementq[11]	&\adjointmfelementq[21]\adjointmfelementq[12]	&\cdots	&\adjointmfelementq[21]\adjointmfelementq[1,\pphases+1]\\
\vdots	&\vdots	&\ddots	&\vdots \\
\adjointmfelementq[\pphases+1,1]\adjointmfelementq[11]	&\adjointmfelementq[\pphases+1,1]\adjointmfelementq[12]	&\cdots	&\adjointmfelementq[\pphases+1,1]\adjointmfelementq[1,\pphases+1]\\
\end{pmatrix},
\end{align}
where $\adjointmfelementq[11] = \det\big( \pintm + s \im  \big)$. Recall now that we only need the $(1,1)$ element of the matrix \densityresolventmatrixmixq. Therefore, according to \Cref{Eq.Resolvent matrix equation}, we just need to find the Laplace inversion of the first row of the matrix exponent $\big(\mfmixq\big)^{-1}$. In particular, its $(1,1)$ element has the series representation
\begin{equation}\label{Eq.Series expansion for the first element of matrix exponent}
 \big(\mfmixq\big)^{-1}_{(1,1)}
 = \big(\mfq\big)^{-1}_{(1,1)}
+ \sum_{n=1}^\infty \epsilon^n \mxkelement{n} \left( \frac{  s \det\big( \pintm + s \im  \big) }{  \det \mfq } \right)^n \big(\mfq\big)^{-1}_{(1,1)},
\end{equation}
given that $\big(\mfq\big)^{-1}_{(1,1)} = \det\big( \pintm + s \im \big) / \det \mfq$.

From \cite[Theorem~5]{ivanovs2012occupation} and the subsequent comment, it follows that the scale matrices \scalematrixmix and \scalematrix are entry\-/wise differentiable. Hence, by following the reasoning in \cite[Section~4.1]{kyprianou-GSRT}, we let \scalematrixmix[dx] and \scalematrix[dx] be the matrices such that $ \int_0^{\infty} e^{-s x}\scalematrixmix[dx] = s \big(\mfmixq\big)^{-1}$ and $ \int_0^{\infty} e^{-s x}\scalematrix[dx] = s \big(\mfq\big)^{-1}$, respectively. Moreover, taking into account the binomial identity $\mxkelement{}= \big( \narrate \big)^n \sum_{m=0}^n \binom{n}{m} \mean[h]^{n-m} (-\mean[p])^m \big( \lthtec \big)^{n-m} \big(\ltptec \big)^m$, we apply Laplace inversion to \Cref{Eq.Series expansion for the first element of matrix exponent} to obtain
\begin{align}\label{Eq.Series expansion for the first element of the scale matrix}
\big( \scalematrixmix[dx] \big)_{(1,1)}
=\ &\big( \scalematrix[dx] \big)_{(1,1)} \notag \\
&+ \sum_{n=1}^\infty \epsilon^n \big( \narrate \big)^n \sum_{m=0}^n \binom{n}{m} \mean[h]^{n-m} (-\mean[p])^m L_{n-m,m}(dx) * \big( \scalematrix[dx] \big)_{(1,1)}^{*(n+1)},
\end{align}
where $\eptc[i] \stackrel{d}{=} \eptc$ and $\ehtc[i] \stackrel{d}{=} \ehtc$ are respectively the stationary\-/excess phase\-/type and heavy\-/tailed claim sizes, $L_{s,r}(dx) = \pr ( \ehtc[1] + \dots + \ehtc[s] + \eptc[1] + \dots + \eptc[r] \in dx)$, the symbol $*$ denotes the convolution between functions, and $\big( \scalematrix[dx] \big)_{(1,1)}^{*n}$ is the $n$th convolution of $\big( \scalematrix[dx] \big)_{(1,1)}$ with itself. Any other element of the first row of the matrix expansion \eqref{Eq.Series expansion perturbed inverse} takes the form
\begin{equation}\label{Eq.Series expansion for any other element of matrix exponent}
 \big(\mfmixq\big)^{-1}_{(1,j)}
 = \big(\mfq\big)^{-1}_{(1,j)}
+ \sum_{n=1}^\infty \epsilon^n \mxkelement{n} \left( \frac{  s \det\big( \pintm + s \im \big) }{  \det \mfq } \right)^n \big(\mfq\big)^{-1}_{(1,j)},
\end{equation}
and consequently, Laplace inversion gives
\begin{align}\label{Eq.Series expansion for any other element of the scale matrix}
\big( \scalematrixmix[dx] \big)_{(1,j)}
 &=\big( \scalematrix[dx] \big)_{(1,j)} \notag \\
+& \sum_{n=1}^\infty \epsilon^n \big( \narrate \big)^n \sum_{m=0}^n \binom{n}{m} \mean[h]^{n-m} (-\mean[p])^m L_{n-m,m}(dx) * \big( \scalematrix[dx] \big)_{(1,1)}^{*n}* \big( \scalematrix[dx] \big)_{(1,j)}.
\end{align}
Given \Cref{Eq.Series expansion for the first element of the scale matrix,Eq.Series expansion for any other element of the scale matrix}, it is easy to find the elements of the scale matrix \scalematrixmix that are important for the evaluation of the Gerber\-/Shiu function. In the next section, we turn our attention to the matrix \leftsolutionmixq, which relates to the eigenvalues with non\-/negative real part (and their eigenvectors) of \mfmixq.

\begin{remark}
Observe that we denote by $f(x)*g(x)$ the convolution between the functions $f$ and $g$ instead of the traditional choice $(f*g)(x)$, i.e.\ $f(x)*g(x) = \int_0^x f(t)g(x-t)dt$. This convention is used in order to avoid the introduction of new notation, e.g.\ it allows us to use interchangeably the convolutions $L_{n-m,m}(x) * \scalematrix[dx] $ and $L_{n-m,m}(dx) *  \scalematrix[x] $, while the same principle applies to convolutions of similar form.
\end{remark}

\subsection{The perturbed matrix \leftsolutionmixq }\label{Section: The perturbed matrix R}
Since \leftsolutionmixq is diagonizable, this means that $\leftsolutionmixq = \big( \leftkernelmixq \big)^{-1} \diagonalmatrixmixq \leftkernelmixq$ for some invertible matrix \leftkernelmixq and a real diagonal matrix \diagonalmatrixmixq. More precisely, the diagonal elements of \diagonalmatrixmixq are exactly the $\pphases+1$ solutions of $\det \mfmixq =0$ with non\-/negative real part; let us denote them as \simplerootq[\epsilon,i], $i=1,\dots,\pphases+1$ (we order them w.l.o.g.\ according to their real part). Then, with similar arguments as in \cite[Corollary~3.6]{ivanovs}, it can easily be shown that the $i$th row of \leftkernelmixq belongs to the (left) nullspace of \mfmixq[{\simplerootq[\epsilon,i]}], $i \in \statespace$.

Thus, in order to construct the matrix \leftsolutionmixq, we need to find first the eigenvalues of \mfmixq, i.e.\ the solutions of the equation $\det \mfmixq =0$ with non\-/negative real part. We calculate these eigenvalues in \Cref{Section: The perturbed eigenvalues with positive real part}. Afterwards, in \Cref{Section: The perturbed left eigenvectors}, we explain how to construct the matrix \leftkernelmixq.

\subsubsection{The perturbed eigenvalues with positive real part}\label{Section: The perturbed eigenvalues with positive real part}

We can easily calculate
\begin{equation}\label{Equation: Determinant of the perturbed matrix exponent}
\det \mfmixq = \det \mfq - \epsilon s \mxkelement{} \det\big( \pintm + s \im \big),
\end{equation}
and it is rather obvious that $0$ is an eigenvalue (simple) of \mfmixq only when $\forceofinterest=0$, which is excluded for now. Let now \simplerootq[i] be any other simple eigenvalue of \mfq with positive real part. Assuming that both \mfq and \mxk are analytic in the neighbourhood of \simplerootq[i], it is known from \cite[Chapter \textsc{Two} \S1]{kato-PTLO} that there exists a unique eigenvalue \simplerootq[\epsilon,i] of \mfmixq with expansion
\begin{equation}
\simplerootq[\epsilon,i] = \simplerootq[i] + \epsilon \simplerootperturbationq[i,1] + \epsilon^2 \simplerootperturbationq[i,2] + \dots
\end{equation}
Thus, if $\diagonalmatrixq = \diag \big( \simplerootq[i] \big)_{i \in \statespace}$, then \diagonalmatrixmixq can be written as perturbation of \diagonalmatrixq as follows:
\begin{equation}
\diagonalmatrixmixq = \diagonalmatrixq + \epsilon \diagonalmatrixperturbationq[1] + \epsilon^2 \diagonalmatrixperturbationq[2] + \dots
\end{equation}

\begin{remark}\label{Remark. Recursive scheme}
The exact value of the coefficient \simplerootperturbationq[i,1] has already been evaluated in \cite[Corollary~A.11]{vatamidou} and it is equal to
\begin{equation}
\simplerootperturbationq[i,1] = \frac{\mxkelement[{\simplerootq[i]}]{} \det\big( \pintm + \simplerootq[i] \im \big)}{\frac{d}{ds}\det \mfq \Bigm\vert_{s=\simplerootq[i]}}.
\end{equation}
The rest of the coefficients \simplerootperturbationq[i,j], $j=2,3,\dots$, can be found recursively by using the Taylor series of \mfq and \mxk around \simplerootq[i] and equating all the coefficients of $\epsilon^n$ ($n=2,3,\dots$) in $\det \mfmixq[{\simplerootq[\epsilon,i]}] =0$ to 0.
\end{remark}

\subsubsection{The perturbed left eigenvectors }\label{Section: The perturbed left eigenvectors}

Using similar arguments as in \cite[Theorem~A.4]{vatamidou}, we can show that the rows of \leftkernelmixq can be defined with the aid of the adjoint matrix of \mfmixq. In particular, if \adjointmfmixq is the adjoint of \mfmixq, then the $i$th row of \leftkernelmixq, $i \in \statespace$, is equal to any row of the matrix \adjointmfmixq[{\simplerootq[\epsilon,i]}] that is not identically equal to zero. Thus, as a first step, we identify in the next theorem the exact form of a non\-/zero row of the matrix \adjointmfmixq.

\begin{theorem}\label{Theorem: Non-zero row of the mix adjoint matrix}
There exists a row of the matrix \adjointmfmixq that is not identically equal to zero and takes the form
\begin{equation}\label{Eq.Exact form of eigenvalues}
\Big( \begin{array}{c;{2pt/2pt}c}
\det\big( \pintm + s \im  \big) & -\parrate \pinp \adj\big(  \pintm + s \im \big)\end{array} \Big).
\end{equation}
If $\forceofinterest=0$ then $\det \mfmix[0] = 0$, which means that 0 is an eigenvalue of \mfmix. Therefore, for $q=s=0$, \Cref{Eq.Exact form of eigenvalues} simplifies to
\begin{equation}\label{Eq.Exact form of eigenvalues simplified}
\Big( \begin{array}{c;{2pt/2pt}c}
\det \pintm  & -\parrate \pinp \adj \pintm \end{array} \Big)
=
\det \pintm  \cdot \Big( \begin{array}{c;{2pt/2pt}c}
1 & -\parrate \pinp \big( \pintm \big)^{-1} \end{array} \Big).
\end{equation}
\end{theorem}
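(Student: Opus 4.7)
The plan is to show that the first row of \adjointmfmixq has the claimed form via a direct cofactor computation. By the definition of the adjugate, its $(1,j)$-entry equals $(-1)^{1+j}\det(\mfmixq^{(j,1)})$, where the superscript $(j,1)$ denotes deletion of row $j$ and column $1$ of \mfmixq. The case $j=1$ is immediate: deleting the first row and column of \mfmixq leaves precisely the lower-right block $\pintm+s\im$, which gives $\det(\pintm+s\im)$ as the leading entry of the proposed row. Since $\det(\pintm+s\im)$ is a monic polynomial in $s$ of degree $\pphases$, the row is not identically zero as a function of $s$.

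For $j\geq 2$, the minor $\mfmixq^{(j,1)}$ is an $\pphases\times\pphases$ matrix whose first row is $\parrate\pinp$ (inherited from the first row of \mfmixq after deletion of column $1$) and whose remaining $\pphases-1$ rows are those of $\pintm+s\im$ with row $j-1$ removed. Expanding the determinant along this first row gives
\begin{equation*}
\det(\mfmixq^{(j,1)})=\sum_{k=1}^{\pphases}(-1)^{1+k}\,\parrate\,\alpha^{(+)}_{k}\,\det\bigl((\pintm+s\im)^{(j-1,k)}\bigr).
\end{equation*}
Multiplying by the outer sign $(-1)^{1+j}$ and comparing with the cofactor formula $[\adj(\pintm+s\im)]_{k,j-1}=(-1)^{(j-1)+k}\det((\pintm+s\im)^{(j-1,k)})$, one recognises the $(1,j)$-entry of \adjointmfmixq as the $(j-1)$-th component of the row vector $-\parrate\pinp\adj(\pintm+s\im)$, which establishes \eqref{Eq.Exact form of eigenvalues}.

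Finally, to derive the simplified form at $q=s=0$, I would first verify $\det\mfmix[0]=0$. Since $\lapexpmix$ carries an overall factor of $s$, we have $\lapexpmix|_{s=0}=0$, so the first column of $\mfmix[0]$ equals $(-\parrate,\pexr)^{T}$; the sum of the remaining $\pphases$ columns equals $(\parrate\pinp\unitcolumn,\pintm\unitcolumn)^{T}=(\parrate,-\pexr)^{T}$ by $\pinp\unitcolumn=1$ and $\pintm\unitcolumn+\pexr=\mathbf{0}$, which is minus the first column, so the columns are linearly dependent. To finish, set $s=0$ in \eqref{Eq.Exact form of eigenvalues} and apply $\adj\pintm=(\det\pintm)(\pintm)^{-1}$, which is legitimate because $\pintm$ is non-singular as the subintensity matrix of a terminating phase-type chain; factoring $\det\pintm$ out of both blocks then yields \eqref{Eq.Exact form of eigenvalues simplified}. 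The only non-routine part of the argument is keeping the sign conventions and index shifts straight in the double cofactor expansion; once those are correctly tracked, the rest is a direct consequence of the block structure of \mfmixq.
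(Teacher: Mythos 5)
Your argument is correct. The conclusion matches the paper's, but you reach it by a genuinely different route: the paper establishes the decomposition $\adjointmfmixq = \adjointmfq - \epsilon\adjointmxk$, observes that $\adjointmxk$ has a vanishing first row so the first row of $\adjointmfmixq$ equals that of $\adjointmfq$, reads off the $(1,1)$ entry directly as $\det(\pintm + s\im)$, and then recovers the remaining entries of the first row algebraically from the matrix identity $\mfmixq\adjointmfmixq = \det\mfmixq\,\im$, using the resulting $\epsilon$-order relation $\mfq\adjointmxk + \mxk\adjointmfq = s\,\mxkelement{}\,\det(\pintm + s\im)\,\im$ together with the fact that $\mxk$ has only the $(1,1)$ entry nonzero. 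You instead perform a direct double cofactor expansion of every entry of the first row of $\adjointmfmixq$, expanding $\det(\mfmixq^{(j,1)})$ along its first row $\parrate\pinp$ and recognising the resulting signed minors as entries of $\adj(\pintm + s\im)$. Your approach is more elementary and entirely self-contained — it never needs the perturbation structure, the correction matrix $\adjointmxk$, or the adjugate identity — at the cost of explicit sign bookkeeping which you have handled correctly (the key cancellation $(-1)^{1+j}(-1)^{1+k} = (-1)^{j+k}$ against $-(-1)^{(j-1)+k}$ checks out). The paper's approach, by contrast, exploits the block structure and the fact that $\mxk$ has rank one to bypass cofactor sign-tracking, which fits naturally into the $\epsilon$-perturbation framework used throughout the section. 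For the $q=s=0$ simplification, both you and the paper verify $\det\mfmix[0]=0$ via the linear dependence of the columns of $\mfmix[0]$ and then apply $\adj\pintm = (\det\pintm)(\pintm)^{-1}$; your justification there matches the paper's.
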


\begin{proof}
Recall that to find the $(i,j)$ element of the adjoint matrix \adjointmfq, we calculate the determinant that occurs when we delete the $j$\-/row and the $i$\-/column of the initial matrix. Since the matrices \mfq and \mfmixq differ only on the $(1,1)$ element, the matrices \adjointmfq and \adjointmfmixq are equal on the 1st row and  1st column; i.e.\ if we delete the $(1,1)$ element either by deleting the 1st row or column, then the remaining minors are equal. Another key observation is that deleting the $(j,i)$ row\-/column combination in the matrix \mfmixq, with $j,i \geq 2$, it means that we delete the $(j-1,i-1)$ row\-/column combination in the matrix $\pintm + s \im $. Therefore, it is easy to verify that
\begin{equation}
  \adjointmfmixq = \adjointmfq - \epsilon\adjointmxk,
\end{equation}
where
\begin{equation}\label{Eq. Definition of the adjoint matrix of K}
  \adjointmxk =  \left[
               \begin{array}{c|c}
                   0                       & \vect{0}\\ \hline
                    \vect{0}               &  s \mxkelement{}\adj\big(  \pintm + s \im \big)
               \end{array}
               \right].
\end{equation}

Any row of \adjointmfmixq that is not identically equal to zero is a suitable candidate. Therefore, we can choose w.l.o.g.\ its 1st row because the matrix \adjointmxk has the 1st row equal to zero and this simplifies the calculations. As a result, we need to find the 1st row of \adjointmfq. To do so, we use \eqref{Equation: Determinant of the perturbed matrix exponent} and the identity $\mfmixq\adjointmfmixq = \det \mfmixq \im$. We obtain
\begin{align}
  \Big(  \mfq - \epsilon\mxk\Big) \Big(  \adjointmfq - \epsilon\adjointmxk\Big)
                        &= \det \mfq \im - \epsilon s \mxkelement{} \det\big( \pintm + s \im \big) \im \notag \\
                        &\Rightarrow \notag \\
  \mfq\adjointmxk + \mxk\adjointmfq &= s \mxkelement{} \det\big( \pintm + s \im  \big) \im, \label{Eq.Identity between matrix, adjoint, and determinant}
\end{align}
since $\mxk\adjointmxk = \vect{0}$. The above equation means that only the diagonal elements of the matrix $\mfq\adjointmxk + \mxk\adjointmfq$ are not identically equal to zero. Observe that we only need the 1st row of \adjointmfq. Obviously, the $(1,1)$ element of \adjointmfq is equal to $\det\big( \pintm + s \im \big)$. To find the remaining elements, we find the 1st row of $\mfq\adjointmxk$ as
\begin{equation*}
\Big( \begin{array}{c;{2pt/2pt}c} 0 & \parrate \pinp s \mxkelement{} \adj\big(  \pintm + s \im \big)\end{array} \Big).
\end{equation*}
and then use the identity \eqref{Eq.Identity between matrix, adjoint, and determinant} to retrieve \Cref{Eq.Exact form of eigenvalues}.
\end{proof}

As explained in the proof of \Cref{Theorem: Non-zero row of the mix adjoint matrix}, the vector \eqref{Eq.Exact form of eigenvalues} is also any row --not identically equal to zero-- of \adjointmfq. If \leftkernelq is the respective matrix of the phase\-/type base model, then its $i$th row is equal to
\begin{equation}
\Big( \begin{array}{c;{2pt/2pt}c} \det\big( \pintm + \simplerootq[i] \im \big) & -\parrate \pinp \adj\big(  \pintm + \simplerootq[i] \im  \big)\end{array} \Big).
\end{equation}
The next theorem states that we can express \leftkernelmixq as a perturbation of \leftkernelq. We denote by $\mathbf{A}_{i \bullet}$ and $\mathbf{A}_{\bullet j}$ the $i$th row and $j$th column of a matrix $\mathbf{A}$, and by \unitvector and \unitmatrix the row vector and matrix respectively with all elements equal to 1.

\begin{theorem}
The matrix \leftkernelmixq admits a series expansion in $\epsilon$ with first term the matrix \leftkernelq, i.e.
\begin{align*}
\big( \leftkernelmixq \big)_{i \bullet}
&= \big( \leftkernelq \big)_{i \bullet} + \epsilon \big ( \leftkernelperturbationq[1] \big)_{i \bullet} + O(\epsilon^2 \unitvector),\\
\intertext{where}
\big ( \leftkernelperturbationq[1] \big)_{i \bullet}
&= \simplerootperturbationq[i,1] {\frac{d}{ds} \Big( \begin{array}{c;{2pt/2pt}c}
\det\big( \pintm + s \im \big) & -\parrate \pinp \adj\big(  \pintm + s \im \big)\end{array} \Big) \Bigm\vert_{s=\simplerootq[i]}}.
\end{align*}
If $q=0$, then the first row of the matrix \leftkernelperturbationq[1] is identically equal to zero.
\end{theorem}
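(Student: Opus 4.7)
The plan is to combine \Cref{Theorem: Non-zero row of the mix adjoint matrix} with the analytic perturbation theory for the eigenvalues \simplerootq[\epsilon,i] recalled in \Cref{Remark. Recursive scheme}. By the discussion following \Cref{Theorem: Non-zero row of the mix adjoint matrix}, the $i$th row of \leftkernelmixq equals $\vec{v}(\simplerootq[\epsilon,i])$, where
\begin{equation*}
\vec{v}(s) \;:=\; \bigl(\,\det(\pintm + s\im),\ -\parrate \pinp \adj(\pintm + s\im)\,\bigr),
\end{equation*}
and likewise $(\leftkernelq)_{i\bullet} = \vec{v}(\simplerootq[i])$. The crucial observation is that the entries of $\adj(\pintm + s\im)$ are polynomials in $s$, so $\vec{v}$ is entire; the only $\epsilon$-dependence of $(\leftkernelmixq)_{i\bullet}$ is carried by the argument $\simplerootq[\epsilon,i]$.

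I would then feed the Taylor expansion $\simplerootq[\epsilon,i] = \simplerootq[i] + \epsilon \simplerootperturbationq[i,1] + O(\epsilon^2)$ from \Cref{Remark. Recursive scheme}---valid by \cite{kato-PTLO,langer1989remarksII} under the standing simplicity assumption---into $\vec{v}$ and expand $\vec{v}$ around \simplerootq[i] to obtain
\begin{equation*}
(\leftkernelmixq)_{i\bullet} \;=\; \vec{v}(\simplerootq[i]) + \epsilon\,\simplerootperturbationq[i,1]\,\vec{v}'(\simplerootq[i]) + O(\epsilon^2 \unitvector).
\end{equation*}
Identifying $(\leftkernelperturbationq[1])_{i\bullet} = \simplerootperturbationq[i,1]\,\vec{v}'(\simplerootq[i])$ yields exactly the displayed expansion.

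It remains to handle the case $q=0$, which reduces to proving $\simplerootperturbationq[1,1] = 0$. I would argue that \mfmix[0] is $\epsilon$-independent: since $\lapexpmix = cs - \narrate s\big((1-\epsilon)\mean[p]\ltptec + \epsilon\mean[h]\lthtec\big)$, one has $\lapexpmix[0] = 0$ regardless of $\epsilon$, and inspection of the block form of \mfmix shows that at $s=0$ it reduces to the transition rate matrix of the background chain, whose row sums vanish. Hence $s=0$ solves $\det \mfmix[0] = 0$ for \emph{every} $\epsilon \in [0,1]$, forcing $\simplerootq[\epsilon,1] \equiv 0$ and in particular $\simplerootperturbationq[1,1] = 0$; the formula for $(\leftkernelperturbationq[1])_{1\bullet}$ derived above then shows that this first row vanishes identically.

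The main delicacy I expect is justifying the analyticity of $\simplerootq[\epsilon,i]$ in $\epsilon$ (which is what legitimises termwise differentiation inside $\vec{v}$), but this has already been discharged by the references cited in \Cref{Remark. Recursive scheme}; everything else reduces to a bookkeeping Taylor expansion of the polynomial-valued function $\vec{v}$.
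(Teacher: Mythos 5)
Your proposal is correct and follows essentially the same route as the paper: identify $(\leftkernelmixq)_{i\bullet}$ with the polynomial row vector from \Cref{Theorem: Non-zero row of the mix adjoint matrix} evaluated at $\simplerootq[\epsilon,i]$, substitute the analytic expansion $\simplerootq[\epsilon,i]=\simplerootq[i]+\epsilon\simplerootperturbationq[i,1]+O(\epsilon^2)$, and read off the coefficient of $\epsilon$. Your treatment of the $q=0$ case (observing that $s=0$ is a root of $\det\mfmix[0]=0$ for \emph{all} $\epsilon$, hence $\simplerootq[\epsilon,1]\equiv 0$ and $\simplerootperturbationq[1,1]=0$) is a welcome and correct elaboration of a point the paper leaves implicit.
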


\begin{proof}
Observe that the 1st element of the vector \eqref{Eq.Exact form of eigenvalues} is a polynomial in $s$ of degree \pphases, while all its other elements are polynomials in $s$ of degree $\pphases-1$. Therefore, the result is immediate by substituting in these polynomials $s = \simplerootq[i] + \epsilon \simplerootperturbationq[i,1] + \epsilon^2 \simplerootperturbationq[i,2] + \dots$ and finding the coefficient of $\epsilon$.
\end{proof}

Deriving explicitly the series expansion of \leftkernelmixq is possible if one follows a similar recursive scheme to the one described in \Cref{Remark. Recursive scheme}. Moreover, we can find
\begin{equation}
\big( \leftkernelmixq \big)^{-1} = \big( \leftkernelq \big)^{-1} - \epsilon \big( \leftkernelq \big)^{-1} \leftkernelperturbationq[1] \big( \leftkernelq \big)^{-1} + O(\epsilon^2 \unitmatrix).
\end{equation}

Finally, we calculate in the next section the matrix exponential $e^{-\leftsolutionmixq z}$ that appears in \Cref{Eq. Gerber-Shiu for spectrally negative MAP}.

\subsubsection{The perturbed matrix exponential}\label{Section: The perturbed matrix exponential}

\begin{proposition}\label{Proposition. The perturned exponential matrix}
The series expansion of the matrix exponent $e^{-\leftsolutionmixq z}$ in \Cref{Eq. Gerber-Shiu for spectrally negative MAP} takes the form
\begin{align*}
e^{-\leftsolutionmixq z} =& \ e^{-\leftsolutionq z} + \epsilon \Big( \big( \leftkernelq \big)^{-1} e^{- \diagonalmatrixq z} \leftkernelperturbationq[1] - z \big( \leftkernelq \big)^{-1} e^{- \diagonalmatrixq z} \diagonalmatrixperturbationq[1] \leftkernelq - \big( \leftkernelq \big)^{-1} \leftkernelperturbationq[1] e^{-\leftsolutionq z} \Big) \notag \\
&+ O(\epsilon^2 \unitmatrix).
\end{align*}
\end{proposition}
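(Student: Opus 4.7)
The natural starting point is the diagonalization $e^{-\leftsolutionmixq z} = \big(\leftkernelmixq\big)^{-1} e^{-\diagonalmatrixmixq z}\, \leftkernelmixq$, which reduces the task to expanding each of the three factors in $\epsilon$ and collecting contributions of order at most one. The expansions of $\leftkernelmixq$ around $\leftkernelq$ and of $\big(\leftkernelmixq\big)^{-1}$ around $\big(\leftkernelq\big)^{-1}$ have already been established in the previous subsection, so the only genuinely new piece is the first-order expansion of $e^{-\diagonalmatrixmixq z}$.

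To handle the diagonal exponential, I would exploit that both $\diagonalmatrixq$ and $\diagonalmatrixperturbationq[1]$ are diagonal and hence commute. This lets me factor $e^{-\diagonalmatrixmixq z} = e^{-\diagonalmatrixq z}\, e^{-\epsilon z \diagonalmatrixperturbationq[1]}\,(\im + O(\epsilon^2 \unitmatrix))$ and Taylor-expand the middle factor entry by entry, yielding $e^{-\diagonalmatrixmixq z} = e^{-\diagonalmatrixq z} - \epsilon z\, e^{-\diagonalmatrixq z} \diagonalmatrixperturbationq[1] + O(\epsilon^2 \unitmatrix)$. Because the diagonal entries of $\diagonalmatrixq$ all have strictly positive real part (they are the eigenvalues of $\leftsolutionq$), the factor $e^{-\diagonalmatrixq z}$ stays bounded in $z\ge 0$, so the remainder is uniform for $\epsilon$ small enough.

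With all three expansions at hand, the next step is to multiply them out and retain only the terms up to order $\epsilon$. The zeroth-order term collapses to $\big(\leftkernelq\big)^{-1} e^{-\diagonalmatrixq z} \leftkernelq = e^{-\leftsolutionq z}$. Of the three first-order contributions, the ones coming from perturbing the diagonal-exponential factor and the rightmost factor immediately produce $-z \big(\leftkernelq\big)^{-1} e^{-\diagonalmatrixq z} \diagonalmatrixperturbationq[1] \leftkernelq$ and $\big(\leftkernelq\big)^{-1} e^{-\diagonalmatrixq z} \leftkernelperturbationq[1]$, as required. The contribution from perturbing $\big(\leftkernelmixq\big)^{-1}$ requires a single simplification: one recognizes the trailing product $\big(\leftkernelq\big)^{-1} e^{-\diagonalmatrixq z} \leftkernelq$ as $e^{-\leftsolutionq z}$ and rewrites the contribution as $-\big(\leftkernelq\big)^{-1} \leftkernelperturbationq[1] e^{-\leftsolutionq z}$. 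Adding the three first-order pieces gives the claimed expansion.

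The step I expect to demand the most care is the expansion of the diagonal exponential: while the commutativity of diagonal matrices makes it trivial on the level of formal power series, the uniformity in $z$ of the $O(\epsilon^2 \unitmatrix)$ remainder must be traced back to the spectrum of $\diagonalmatrixq$ sitting in the open right half-plane. Everything else reduces to routine matrix multiplication and collection of terms.
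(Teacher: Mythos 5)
Your plan follows the paper's proof exactly: diagonalize $e^{-\leftsolutionmixq z} = \big(\leftkernelmixq\big)^{-1} e^{-\diagonalmatrixmixq z}\, \leftkernelmixq$, expand the middle exponential by exploiting the commutativity of the diagonal matrices $\diagonalmatrixq$ and $\diagonalmatrixperturbationq[1]$, substitute the previously derived first-order expansions of the conjugating matrices, and collect the order-$\epsilon$ contributions. Your added remark that the decay of $e^{-\diagonalmatrixq z}$ (driven by the eigenvalues of $\leftsolutionq$ lying in the open right half-plane) secures uniformity in $z$ of the $O(\epsilon^2\unitmatrix)$ remainder is a pertinent detail the paper leaves implicit.
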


\begin{proof}
Using the definition of the matrix exponent, i.e.\ $e^{\vect{A}} = \sum_{k=0}^{+\infty} \vect{A}^k /k!$ for a square matrix \vect{A}, we can find that $\displaystyle e^{-\diagonalmatrixmixq z} = e^{- \diagonalmatrixq z} - \epsilon  e^{- \diagonalmatrixq z} \diagonalmatrixperturbationq[1] z + O(\epsilon^2 \unitmatrix) $ because \diagonalmatrixq and \diagonalmatrixperturbationq[1] are diagonal matrices. Thus,
\begin{align*}
e^{-\leftsolutionmixq z}
=& \big( \leftkernelmixq \big)^{-1} e^{-\diagonalmatrixmixq z} \leftkernelmixq\\
=& \Big(  \big( \leftkernelq \big)^{-1} - \epsilon \big( \leftkernelq \big)^{-1} \leftkernelperturbationq[1] \big( \leftkernelq \big)^{-1} + O(\epsilon^2 \unitmatrix) \Big)
e^{-\big( \diagonalmatrixq + \epsilon \diagonalmatrixperturbationq[1] + O(\epsilon^2 \unitmatrix) \big) z} \\
& \times \Big(  \leftkernelq  + \epsilon \leftkernelperturbationq[1]+ O(\epsilon^2 \unitmatrix) \Big)\\
=&\Big(  \big( \leftkernelq \big)^{-1} - \epsilon \big( \leftkernelq \big)^{-1} \leftkernelperturbationq[1] \big( \leftkernelq \big)^{-1} + O(\epsilon^2 \unitmatrix) \Big)
\Big( e^{- \diagonalmatrixq z} - \epsilon  e^{- \diagonalmatrixq z} \diagonalmatrixperturbationq[1] z + O(\epsilon^2 \unitmatrix)  \Big)\\
& \times \Big(  \leftkernelq  + \epsilon \leftkernelperturbationq[1]+ O(\epsilon^2 \unitmatrix) \Big)\\
=& e^{-\leftsolutionq z} + \epsilon \Big( \big( \leftkernelq \big)^{-1} e^{- \diagonalmatrixq z} \leftkernelperturbationq[1] - z \big( \leftkernelq \big)^{-1} e^{- \diagonalmatrixq z} \diagonalmatrixperturbationq[1] \leftkernelq - \big( \leftkernelq \big)^{-1} \leftkernelperturbationq[1] e^{-\leftsolutionq z} \Big)
+ O(\epsilon^2 \unitmatrix).
\end{align*}
\end{proof}

\begin{corollary}\label{Corollary. Resolvent ID perturbed}
  The series expansion of the \forceofinterest-resolvent measure $\big( \densityresolventmatrixmixq \big)_{(1,1)}$ in \Cref{Lemma. Resolvent ID} takes the form
  \begin{align}\label{Eq.Resolvent matrix equation perturbed}
   \big( \densityresolventmatrixmixq \big)_{(1,1)}
   =& \big( \densityresolventmatrixq \big)_{(1,1)} + \epsilon \correctionresolventq + O(\epsilon^2),
  \end{align}
for all $u,z\geq 0$, where
   \begin{align}
   \correctionresolventq
    =& \narrate \big( \mean[h] \htecd[u] -\mean[p] \ptecd[u] \big)* \big( \scalematrix[du] \big)_{(1,1)} *\big( \scalematrix[du] \big)_{1\bullet} \cdot \Big( e^{-\leftsolutionq z} \Big)_{\bullet 1} \notag \\
    &+ \big( \scalematrix[u] \big)_{1\bullet} \cdot \Big( \big( \leftkernelq \big)^{-1} e^{- \diagonalmatrixq z} \leftkernelperturbationq[1] - z \big( \leftkernelq \big)^{-1} e^{- \diagonalmatrixq z} \diagonalmatrixperturbationq[1] \leftkernelq - \big( \leftkernelq \big)^{-1} \leftkernelperturbationq[1] e^{-\leftsolutionq z} \Big)_{\bullet 1} \notag \\
    &- \narrate \int_0^{u-z}\big( \mean[h] \htecd[u-z-x] -\mean[p] \ptecd[u-z-x] \big) \big( \scalematrix[dx] \big)_{(1,1)}^{*2}.\label{Eq. The correction term of the resolvent matrix}
  \end{align}
\end{corollary}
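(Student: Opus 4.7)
The strategy is a direct substitution: apply \Cref{Lemma. Resolvent ID} to the perturbed process, expand each of the three perturbed objects ($\scalematrixmix[u]$, $\scalematrixmix[u-z]$, and $e^{-\leftsolutionmixq z}$) in powers of $\epsilon$, and identify the coefficient of $\epsilon^{1}$ with the three summands of the claimed formula for $\correctionresolventq$. First I would write the $(1,1)$-entry as $\big(\densityresolventmatrixmixq\big)_{(1,1)} = \sum_{j=1}^{\pphases+1}\big(\scalematrixmix[u]\big)_{(1,j)}\big(e^{-\leftsolutionmixq z}\big)_{(j,1)} - \big(\scalematrixmix[u-z]\big)_{(1,1)}$, i.e., a row--column product minus a scalar correction. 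The $\epsilon^{0}$-term reproduces $\big(\densityresolventmatrixq\big)_{(1,1)}$ by the base-model version of \Cref{Lemma. Resolvent ID}, so only the $\epsilon^{1}$-contributions need to be examined.

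The order-$\epsilon$ coefficient of the row--column product splits additively into a (perturbed-row)$\times$(base-column) piece plus a (base-row)$\times$(perturbed-column) piece. For the first piece I would use the uniform $n=1$ coefficient supplied by \Cref{Eq.Series expansion for the first element of the scale matrix,Eq.Series expansion for any other element of the scale matrix}, namely $\narrate\big(\mean[h] L_{1,0}(du)-\mean[p] L_{0,1}(du)\big)*\big(\scalematrix[du]\big)_{(1,1)}*\big(\scalematrix[du]\big)_{(1,j)}$; since the $j$-dependence is cleanly factored, summing against $\big(e^{-\leftsolutionq z}\big)_{(j,1)}$ collapses the $j$-sum to the row vector $\big(\scalematrix[du]\big)_{1\bullet}$ dotted with the column $\big(e^{-\leftsolutionq z}\big)_{\bullet 1}$, giving the first summand of $\correctionresolventq$. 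The second piece is read off directly from \Cref{Proposition. The perturned exponential matrix}: dotting $\big(\scalematrix[u]\big)_{1\bullet}$ with the first column of the bracketed $\epsilon$-correction matrix there reproduces the middle summand of $\correctionresolventq$ verbatim. Finally, the contribution $-\big(\scalematrixmix[u-z]\big)_{(1,1)}$, expanded via \Cref{Eq.Series expansion for the first element of the scale matrix} at $n=1$ and written as a Stieltjes integral on $[0,u-z]$, produces the third summand with the correct minus sign.

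The main obstacle, and the only nontrivial step, is the bookkeeping conversion between the measure-convolution form used in \Cref{Eq.Series expansion for the first element of the scale matrix,Eq.Series expansion for any other element of the scale matrix} and the Stieltjes-integral form appearing in the corollary. Here one exploits the identifications $L_{1,0}(du)=d\htecd[u]$ and $L_{0,1}(du)=d\ptecd[u]$ together with a Fubini exchange: for any finite measure $\nu$ on $[0,y]$, $\int_0^{y} L_{1,0}(dw)\,\nu([0,y-w]) = \int_0^{y}\htecd[y-x]\,\nu(dx)$, and analogously for $L_{0,1}$. Applying this with $\nu = \big(\scalematrix[\cdot]\big)_{(1,1)}^{*2}$ converts the measure-level expressions of the scale-matrix expansion into precisely the integrated forms appearing in $\correctionresolventq$. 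Everything else is routine collection of terms; the neglected $O(\epsilon^{2}\unitmatrix)$ remainder comes from products of $\epsilon$-corrections (perturbed scale row against perturbed exponential column, and the higher-order tails of both series), all of which are absorbed into the stated error term.
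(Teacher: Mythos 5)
Your proposal is correct and follows essentially the same route as the paper: substitute into \Cref{Lemma. Resolvent ID}, expand $\big(\scalematrixmix\big)_{1\bullet}$ via \Cref{Eq.Series expansion for the first element of the scale matrix,Eq.Series expansion for any other element of the scale matrix} and $e^{-\leftsolutionmixq z}$ via \Cref{Proposition. The perturned exponential matrix}, then identify the $\epsilon^{1}$-coefficient. The paper states this very briefly; your write-up simply spells out the bookkeeping (the $n=1$ coefficients, the factorization that collapses the $j$-sum, and the measure-to-Stieltjes conversion) that the paper leaves implicit.
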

\begin{proof}

From \Cref{Eq.Series expansion for the first element of the scale matrix,Eq.Series expansion for any other element of the scale matrix}, we find that the 1st row of the matrix \scalematrixmix is given by
\begin{equation*}
\big( \scalematrixmix \big)_{1\bullet}
=\ \big( \scalematrix \big)_{1\bullet}
+ \epsilon \narrate \big( \mean[h] \htecd -\mean[p] \ptecd \big)* \big( \scalematrix[dx] \big)_{(1,1)}*\big( \scalematrix[dx] \big)_{1\bullet} + O(\epsilon^2 \unitmatrix).
\end{equation*}
Thus, to retrieve \Cref{Eq. The correction term of the resolvent matrix}, we need to combine the above formula with \Cref{Lemma. Resolvent ID} and \Cref{Proposition. The perturned exponential matrix}, and identify the coefficient of $\epsilon$.
\end{proof}

\section{Corrected approximation for the Gerber-Shiu function}\label{Section: Corrected approximation for the Gerber-Shiu function}

Combining the previous results produces the following expansion for the Gerber\-/Shiu function.

\begin{theorem}\label{Theorem. Series expansion of the GS function}
Let $q>0$. The GS function \gerbermixq in \Cref{Eq. Gerber-Shiu for spectrally negative MAP} has a series expansion as follows
\begin{align*}
  \gerbermixq =&\ \gerberq \\
  &+ \epsilon \constantfactor \narrate \Bigg(
             \int_{[0,\infty)} \int_{[0,\infty)} \penaltyf{y}{z}  \densityresolventmatrixq_{(1,1)} \Big(
 \htcd(z + dy) - \ptcd(z + dy) \Big) dz \\
  &+
             \int_{[0,\infty)} \int_{[0,\infty)} \penaltyf{y}{z}  \correctionresolventq
 \ptcd(z + dy)   dz \Bigg)
 + O(\epsilon^2),
\end{align*}
where \gerberq and $\densityresolventmatrixq_{(1,1)}$ are evaluated respectively through \Cref{Eq. Gerber-Shiu for spectrally negative Levy process simplified,Eq.Resolvent matrix equation} with $\epsilon=0$.
\end{theorem}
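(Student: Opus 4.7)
The plan is to obtain the claimed expansion by plugging two $\epsilon$-series into the integral representation \Cref{Eq. Gerber-Shiu for spectrally negative MAP} of the Gerber-Shiu function, and then collecting terms by powers of $\epsilon$.

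Concretely I would write, via \Cref{Eq. Perturbation of the claim size distribution},
\begin{equation*}
\mixcd(z+dy) = \ptcd(z+dy) + \epsilon\bigl(\htcd(z+dy)-\ptcd(z+dy)\bigr),
\end{equation*}
multiply it by the expansion
\begin{equation*}
\bigl(\densityresolventmatrixmixq\bigr)_{(1,1)} = \bigl(\densityresolventmatrixq\bigr)_{(1,1)} + \epsilon\,\correctionresolventq + O(\epsilon^2)
\end{equation*}
supplied by \Cref{Corollary. Resolvent ID perturbed}, integrate against $\penaltyf{y}{z}$ and the prefactor $\constantfactor\narrate$, and then sort by order in $\epsilon$. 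The $\epsilon^0$ contribution is $\constantfactor\narrate \int\!\!\int \penaltyf{y}{z}\bigl(\densityresolventmatrixq\bigr)_{(1,1)} \ptcd(z+dy)\,dz$, which coincides with $\gerberq$ by applying \Cref{Eq. Gerber-Shiu for spectrally negative Levy process simplified} to the base model ($\epsilon=0$, so $\mixcd \equiv \ptcd$). The $\epsilon^1$ contribution splits into exactly the two cross-products announced in the theorem: the base resolvent density paired with $\htcd-\ptcd$, and the correction $\correctionresolventq$ paired with $\ptcd$.

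What remains is to control the remainder at order $\epsilon^2$. Since $\penaltyfunction$ is assumed bounded and measurable, I would pull $|\penaltyf{y}{z}|\le \|\penaltyfunction\|_\infty$ out of both integrals; the task then reduces to showing that, for every fixed $u,q>0$, the $O(\epsilon^2)$ tail of \Cref{Corollary. Resolvent ID perturbed}, together with the cross-term $\epsilon^2\,\correctionresolventq\cdot(\htcd-\ptcd)(z+dy)$, integrates to a quantity of size $O(\epsilon^2)$ against the finite measure $\mixcd(z+dy)\,dz$ on $[0,\infty)^2$. This follows from the structure of the expansions constructed in \Cref{Section: The perturbed scale matrix,Section: The perturbed eigenvalues with positive real part,Section: The perturbed left eigenvectors,Section: The perturbed matrix exponential}: all higher-order coefficients are finite convolutions of the base scale matrix $\scalematrixsimple$ with the stationary-excess laws $\ptecds$ and $\htecds$—both of finite total mass since $\mean[p]$ and $\mean[h]$ are finite—plus bounded factors coming from the rational matrix $\bigl(\mfq\bigr)^{-1}$ evaluated near its simple eigenvalues $\simplerootq[i]$.

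The main obstacle I anticipate is purely the bookkeeping of the first-order coefficient: verifying that the two surviving $\epsilon^1$ cross-products combine exactly into the bracketed expression of the theorem (with the particular form of $\correctionresolventq$ recorded in \Cref{Eq. The correction term of the resolvent matrix}), and that no first-order contribution has been inadvertently pushed into the higher-order tail. Once that identification is done and the uniform $O(\epsilon^2)$ bound is secured as above, the theorem is immediate.
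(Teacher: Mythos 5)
Your proposal is correct and follows exactly the route the paper implicitly takes: the paper states the theorem with only the one-line remark ``Combining the previous results produces the following expansion,'' and your proposal simply makes explicit the substitution of the two $\epsilon$-expansions (for $\mixcd$ and for $\big(\densityresolventmatrixmixq\big)_{(1,1)}$ via \Cref{Corollary. Resolvent ID perturbed}) into \Cref{Eq. Gerber-Shiu for spectrally negative MAP}, followed by collecting coefficients of $\epsilon^0$, $\epsilon^1$, and bounding the remainder using the boundedness of $\penaltyfunction$ and finiteness of $\mean[p]$, $\mean[h]$. One small point worth making explicit in your write-up: the prefactor $\constantfactor$ does not itself depend on $\epsilon$, because $\mfmix[0]$ (the transition rate matrix of \jumpprocess) is obtained by setting $s=0$ in \eqref{e4.matrix exponent mixture model}, where $\lapexpmix[0]=0$ regardless of $\epsilon$; otherwise one would need to expand $\constantfactor$ in $\epsilon$ too, and the stated decomposition would acquire an extra first-order term.
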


Note that \gerberq is the GS function for the base model without the heavy\-/tailed component \htcd and having only phase\-/type upward and downwards jumps. Moreover, the error that this approximation makes is of order $O(\epsilon)$. By keeping only the two first terms of the series expansion of \gerbermixq in \Cref{Theorem. Series expansion of the GS function}, we can define an approximation for the GS function that makes an error of order $O(\epsilon^2)$. We call it {\it corrected phase\-/type approximation} because the $\epsilon$-order term corrects the behavior of \gerberq, which is by itself a phase\-/type approximation for \gerbermixq.

\begin{approximation}\label{Approximation}
For $\forceofinterest >0$, the corrected phase\-/type approximation for \gerbermixq is defined as
\begin{align*}
  \approxgerbermixq :=&\ \gerberq \\
  &+ \epsilon \constantfactor \narrate \Bigg(
             \int_{[0,\infty)} \int_{[0,\infty)} \penaltyf{y}{z}  \densityresolventmatrixq_{(1,1)} \Big(
 \htcd(z + dy) - \ptcd(z + dy) \Big) dz \\
  &+
             \int_{[0,\infty)} \int_{[0,\infty)} \penaltyf{y}{z}  \correctionresolventq
 \ptcd(z + dy)   dz \Bigg),
\end{align*}
where $\gerberq = \displaystyle \narrate
             \int_{[0,\infty)} \int_{[0,\infty)} \penaltyf{y}{z}  \densityresolventmatrixq_{(1,1)} \ptcd(z + dy) dz $.
\end{approximation}

Observe that one can construct even more accurate approximations for \gerbermixq by simply keeping more terms in the series expansion of \Cref{Theorem. Series expansion of the GS function}. More precisely, an approximation including up to the $\epsilon^n$-order term gives an error of $O(\epsilon^{n+1})$. However, calculating higher order terms is not always easy. The difficulty mainly comes from the evaluation of the series expansion of the matrix $e^{-\leftsolutionmixq z}$ because the scale matrix \scalematrixmix[u] has an explicit series expansion. In certain models where the series expansion of $e^{-\leftsolutionmixq z}$ can be calculated explicitly, then it is also possible to derive the whole series expansion for \gerbermixq as well. One such particular example is the ruin probability in the classical Cram{\'e}r\-/Lundberg.

\subsection{Application -- Ruin probability in the Cram{\'e}r-Lundberg risk model}
In this section, we derive the corrected phase\-/type approximation for the ruin probability of the classical Cram{\'e}r\-/Lundberg risk model. Since there are no positive jumps, $\parrate = \pphases = 0$, $\levymixnew \equiv \levymix$, and $\constantfactor=1$. Moreover, we must take $\penaltyfunction \equiv 1$ and $\forceofinterest=0$. In fact, to apply \Cref{Approximation}, we should take $\forceofinterest \downarrow 0$ and treat both sides of this approximation in the limiting sense. Note that this procedure is correct since both sides of this identity converge as $\forceofinterest \downarrow 0$.
According to \Cref{Section: Fluid embedding and the Gerber-Shiu function for MAPs}, the matrix \mfmix, which is actually equal to the scalar \lapexpmix in \Cref{e4.laplace exponent of the mixture model}, has zero as its only single eigenvalue and thus $e^{-\leftsolutionmixq z} = 1$, which makes the middle term in \eqref{Eq. The correction term of the resolvent matrix} vanish. Finally, the positive safety loading condition takes the form $\narrate (1-\epsilon)\mean[p] + \narrate \epsilon\mean[h] <c$ for the mixture model and $\narrate \mean[p] /c <1$ for the base model.

For the base model, we calculate the scale matrix/function through
\begin{align*}
   \big( \mf \big)^{-1} = \frac{1}{cs} \cdot \frac{1}{1- \frac{\narrate \mean[p]}{c} \ltptec }
   = \frac{1}{cs} \sum_{n=0}^{+\infty} \left( \frac{\narrate \mean[p]}{c} \right)^n \big( \ltptec \big)^n
   \Rightarrow
   \scalematrixsimple = \frac{1}{c} \sum_{n=0}^{+\infty} \left( \frac{\narrate \mean[p]}{c} \right)^n \ptecconv{n},
\end{align*}
while the resolvent is equal to
\begin{align*}
    \densityresolventmatrix
    = \scalematrixsimple[u] - \scalematrixsimple[u-z]
    =  \frac{1}{c} \sum_{n=0}^{+\infty} \left( \frac{\narrate \mean[p]}{c} \right)^n \Big( \ptecconv[u]{n} - \ptecconv[u-z]{n} \Big).
\end{align*}
If $M=-\inf_{0 \leq t < \infty} \big( \levy - \levy[0] \big)$ and \ruinprobability is the ruin probability of the base model, then the Pollaczeck\-/Khinchine formula gives
\begin{gather*}
    \pr (M \leq u) = 1 - \ruinprobability
    = \left( 1 - \frac{\narrate \mean[p]}{c} \right) \sum_{n=0}^{+\infty} \left( \frac{\narrate \mean[p]}{c} \right)^n \ptecconv[u]{n}
    = \big( c - \narrate \mean[p] \big) \scalematrixsimple[u]\\
    \Rightarrow \\
    \densityresolventmatrix = \frac{1}{c - \narrate \mean[p]} \big( \pr (M \leq u) - \pr (M \leq u-z) \big).
\end{gather*}
In addition, we calculate
   \begin{align*}
   \correctionresolventq
    =& v^{(0)}(u,z)=\narrate \big( \mean[h] \htecd[u] -\mean[p] \ptecd[u] \big)* \big( \scalematrixsimple[du] \big)^{*2}  \\
    &- \narrate \int_0^{u-z}\big( \mean[h] \htecd[u-z-x] -\mean[p] \ptecd[u-z-x] \big) \big( \scalematrixsimple[dx] \big)^{*2}\\
    =& \frac{\narrate}{(c - \narrate \mean[p])^2} \big( \mean[h] \htecd[u] -\mean[p] \ptecd[u] \big)* \big( \pr(M \in du) \big)^{*2}\\
    &- \frac{\narrate}{(c - \narrate \mean[p])^2} \int_0^{u-z}\big( \mean[h] \htecd[u-z-x] -\mean[p] \ptecd[u-z-x] \big) \big( \pr(M \in dx) \big)^{*2}.
  \end{align*}

Noting that $\int_0^\infty \htcd(z + dy) = \mean[h]\htecd[dz]/dz$ and $\int_0^\infty \ptcd(z + dy) = \mean[p]\ptecd[dz]/dz$ and letting $M^* \equalindistribution M$, $\ptc^{e*} \equalindistribution \eptc$, and $\htc^{e*} \equalindistribution \ehtc$, where all are mutually independent, the approximation for the ruin probability \approxruinprobabilitymix in \Cref{Approximation} then takes the form
\begin{align*}
    \approxruinprobabilitymix =&\  \approxgerbermixruinq[0] = \ruinprobability +
    \epsilon \narrate \Bigg(
              \int_{[0,\infty)}  \densityresolventmatrix
\Big( \mean[h] \htecd[dz] -\mean[p] \ptecd[dz] \Big)
    +  \int_{[0,\infty)}  \correctionresolventq
 \mean[p] \ptecd[dz]  \Bigg) \\
    =& \ \ruinprobability + \frac{\epsilon \narrate (\mean[h] - \mean[p])}{c - \narrate \mean[p]}    \pr (M \leq u)
    - \frac{\epsilon \narrate}{c - \narrate \mean[p]}  \big( \mean[h] \pr (M + \ehtc \leq u) - \mean[p] \pr (M + \eptc \leq u) \big) \\
    &+ \frac{\epsilon \narrate \mean[p]}{(c - \narrate \mean[p])^2} \big( \narrate \mean[h] \pr (M +M^* + \ehtc \leq u)  - \narrate \mean[p] \pr (M +M^* + \eptc \leq u)  \big) \\
    &- \frac{\epsilon \narrate \mean[p]}{(c - \narrate \mean[p])^2} \big( \narrate \mean[h] \pr (M +M^* + \ehtc + \eptc \leq u)  - \narrate \mean[p] \pr (M +M^* + \eptc + \ptc^{e*} \leq u)  \big) = \dots \\
    =& \ \ruinprobability + \frac{\epsilon \narrate \mean[h]}{c - \narrate \mean[p]}  \big( \pr (M +M^* + \ehtc > u) - \pr (M >u) \big) \\
    & -  \frac{\epsilon \narrate \mean[p]}{c - \narrate \mean[p]}\big(  \big( \pr (M +M^* + \eptc > u) - \pr (M >u) \big).
\end{align*}
Notice that we have used the identity $\pr (M \in dx)* \ptecd[dx] = \pr (M + \eptc \in  dx) = \displaystyle \frac{c}{\narrate \mean[p]}\pr (M \in dx) - \frac{c - \narrate \mean[p] }{\narrate \mean[p]}$ to simplify the convoluted probabilities $\pr (M +M^* + \ehtc + \eptc \leq u)$ and $\pr (M +M^* + \eptc + \ptc^{e*} \leq u)$, while $\int_0^\infty \htecd[dz] =\int_0^\infty \ptecd[dz] = 1$, because \htecds and \ptecds are distributions. The above result is in accord with \cite{vatamidou2013correctedrisk}, which further accommodates the whole series expansion of \approxruinprobabilitymix.

\subsection{Heavy-tailed asymptotics}
Some of the summands in the $\epsilon$ term of our corrected phase\-/type approximation \approxgerbermixq are convolutions of a single excess heavy\-/tailed claim size \ehtc with some light tailed distributions. We make rigorous in this section that these summands administer a heavy\-/tailed behavior to the approximation. Note that we don't aim at deriving exact tail asymptotics. We rather intend to provide a feeling for the asymptotic behavior of the approximation and thus treat this section more as a discussion than a strict proof.

For our purposes, we assume that \htcd belongs to the class of long\-/tailed distributions, that is, for any fixed $y>0$ we have $\lim_{x \rightarrow \infty} \chtcd(x+y)/\chtcd(x)=1$. In addition, recall that \penaltyfunction is a bounded function, i.e.\ $\penaltyfunction \leq a$ for some constant $a>0$. Finally, we only focus on the case $\forceofinterest>0$, since the proof for $\forceofinterest=0$ can be handled as a limiting case.

Since \penaltyfunction is bounded, it is clear from \Cref{Eq.Gerber-Shiu definition} that the GS function is bounded from above by the ruin probability multiplied by a constant. In this case, from \cite[Lemma~1 \& Equation~(19)]{asmussen2004russian},
\begin{equation*}
    \gerberq = \constantfactor \narrate
             \int_{[0,\infty)} \int_{[0,\infty)} \penaltyf{y}{z}  \densityresolventmatrixq_{(1,1)} \ptcd(z + dy) dz  \quad \leq \quad \beta e^{-\gamma u},
\end{equation*}
for some constant $\beta$ and a strictly positive number $\gamma$. We will show that each of the remaining two terms of \Cref{Approximation} can be split in a part which is also bounded by an exponential and a heavy\-/tailed part that is responsible for the tail behavior of the approximation \approxgerbermixq.

\paragraph{Term 1.} We start our discussion with the term
\begin{equation*}\label{Eq. Term 1}
     \epsilon \constantfactor \narrate \int_{[0,\infty)} \int_{[0,\infty)} \penaltyf{y}{z}  \correctionresolventq \ptcd (z + dy) dz,
\end{equation*}
where \correctionresolventq is provided in \Cref{Corollary. Resolvent ID perturbed}. Clearly, we can write $\correctionresolventq = \ptcorrectionresolventq + \htcorrectionresolventq$, where
\begin{align}
        \ptcorrectionresolventq
    =& - \narrate \mean[p] \ptecd[u] * \big( \scalematrix[du] \big)_{(1,1)} *\big( \scalematrix[du] \big)_{1\bullet} \cdot \Big( e^{-\leftsolutionq z} \Big)_{\bullet 1} \notag \nonumber\\
    &+ \big( \scalematrix[u] \big)_{1\bullet} \cdot \Big( \big( \leftkernelq \big)^{-1} e^{- \diagonalmatrixq z} \leftkernelperturbationq[1] - z \big( \leftkernelq \big)^{-1} e^{- \diagonalmatrixq z} \diagonalmatrixperturbationq[1] \leftkernelq - \big( \leftkernelq \big)^{-1} \leftkernelperturbationq[1] e^{-\leftsolutionq z} \Big)_{\bullet 1} \notag \nonumber \\
    &+ \narrate \int_0^{u-z}\mean[p] \ptecd[u-z-x] \big( \scalematrix[dx] \big)_{(1,1)}^{*2}, \label{Eq. PH part of the corrected resolvent}
\intertext{and}
  \htcorrectionresolventq
    =& \narrate  \mean[h] \htecd[u] * \big( \scalematrix[du] \big)_{(1,1)} *\big( \scalematrix[du] \big)_{1\bullet} \cdot \Big( e^{-\leftsolutionq z} \Big)_{\bullet 1} \notag \\
    &- \narrate \int_0^{u-z} \mean[h] \htecd[u-z-x]  \big( \scalematrix[dx] \big)_{(1,1)}^{*2}. \label{Eq. HT part of the corrected resolvent}
\end{align}

\begin{lemma}\label{Lemma: Exponentially bounded}
The term
\begin{equation*}
    \qquad \epsilon \constantfactor \narrate \int_{[0,\infty)} \int_{[0,\infty)} \penaltyf{y}{z}  \ptcorrectionresolventq \ptcd (z + dy) dz
\end{equation*}
is bounded by an exponential.
\end{lemma}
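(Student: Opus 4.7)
The plan is to reduce the double integral to a one-dimensional problem in $u$, observe that every ingredient of $\ptcorrectionresolventq$ comes from the phase-type base model, and then obtain exponential decay via cancellation of the growing matrix-exponential components. Since $\penaltyfunction \le a$ for some $a>0$ and $\int_0^\infty \ptcd(z+dy) = \cptcd$, it suffices to bound
\begin{equation*}
a \int_0^\infty \big|\ptcorrectionresolventq\big|\, \cptcd\, dz
\end{equation*}
by an exponentially decaying function of $u$. Integrability in $z$ is immediate: the $z$-dependence of $\ptcorrectionresolventq$ sits in the matrix exponentials $e^{-\leftsolutionq z}$ and $e^{-\diagonalmatrixq z}$, which decay exponentially because $\leftsolutionq$ and $\diagonalmatrixq$ are positive-stable, and $\cptcd$ decays exponentially since $\ptcd$ is phase-type.

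The key observation is that every ingredient of $\ptcorrectionresolventq$ --- the stationary excess $\ptecd$, the base scale matrix $\scalematrix$, and the constants $\leftkernelq$, $\leftkernelperturbationq[1]$, $\diagonalmatrixperturbationq[1]$ obtained from the eigen-structure of $\mfq$ at its roots $\simplerootq[i]$ --- originates from the phase-type base model, in which all relevant Laplace transforms are rational in $s$. After integrating $z$ out against $\cptcd\,dz$ (which produces $u$-independent constant vectors), each of the three summands in \eqref{Eq. PH part of the corrected resolvent} becomes a finite linear combination of matrix-exponential functions $e^{\rho u} u^k$, with exponents $\rho$ drawn from the eigenvalues of $\mfq$ and from the sub-intensity matrix of $\ptcd$. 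The Laplace transform in $u$ of $\int_0^\infty \ptcorrectionresolventq \cptcd\, dz$ is therefore rational, and the claim reduces to verifying that its poles all lie in $\{\Re(s) < 0\}$.

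The main obstacle is the second summand in \eqref{Eq. PH part of the corrected resolvent}: its $u$-dependence lies entirely in $\big(\scalematrix[u]\big)_{1\bullet}$, which grows exponentially and is not convolved against a decaying phase-type density. The resolution must be a cancellation between the three summands, so that the constant vector produced by the $z$-integral against $\cptcd$ is aligned to annihilate the growing component of $\big(\scalematrix[u]\big)_{1\bullet}$; this mirrors exactly the cancellation built into the identity $\densityresolventmatrixq = \scalematrix[u] e^{-\leftsolutionq z} - \scalematrix[u-z]$ that already delivers the bound $\gerberq \le \beta e^{-\gamma u}$ recalled from \cite[Lemma~1 and Eq.~(19)]{asmussen2004russian}. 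A clean way to make the cancellation rigorous is to reinterpret $\ptcorrectionresolventq$ as the $\epsilon$-derivative at $\epsilon = 0$ of a purely phase-type resolvent family --- obtained by replacing $\htcd$ by a phase-type surrogate that matches $\htcd$ at the finitely many scalar evaluations entering $\leftkernelperturbationq[1]$ and $\diagonalmatrixperturbationq[1]$ --- and then to quote analytic perturbation theory together with the uniform-in-$\epsilon$ exponential bound on the phase-type resolvent. The derivative then inherits the exponential decay in $u$, which is what the lemma asserts.
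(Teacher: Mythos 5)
Your reduction to bounding $a\int_0^\infty\big|\ptcorrectionresolventq\big|\cptcd(z)\,dz$ and your identification of the real obstacle --- the second summand, whose $u$-dependence sits in the growing factor $\big(\scalematrix[u]\big)_{1\bullet}$ and is not softened by convolution against a decaying density --- match the paper's framing. But the device you propose to finish is not justified at the one place that matters. You want to ``reinterpret \ptcorrectionresolventq as the $\epsilon$-derivative at $\epsilon=0$ of a purely phase-type resolvent family'' obtained by replacing \htcd with a phase-type surrogate $\tilde G$ whose Laplace transform matches $\lthtec$ at the roots $\simplerootq[i]$. Two objections. First, even granting the surrogate, its $\epsilon$-derivative is \emph{not} \ptcorrectionresolventq: the first and third summands of that derivative would carry $\tilde G^e$-convolutions in place of the purely $\ptecds$-convolutions appearing in \eqref{Eq. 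PH part of the corrected resolvent}, so you would still need to subtract these off and argue separately for their decay. That part is repairable since $\tilde G^e$ is light-tailed, but it is not the clean reinterpretation you claim. Second, and fatally as stated, the existence of a phase-type $\tilde G$ matching the prescribed complex values at the (generally complex, conjugate-paired) zeros of $\det\mfq$ is a nontrivial rational interpolation problem constrained by positivity of the inverse transform; you assert it without proof and it is not clear it can always be solved. The ``uniform-in-$\epsilon$ exponential bound'' and the right to differentiate in $\epsilon$ inside an exponentially weighted norm are likewise stated but not established.

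The paper takes an entirely different, more direct route that sidesteps interpolation. It splits the $z$-integral at $z=u$: the outer part $\int_u^\infty$ is immediately controlled by $\cptcd(u)$ since $\int_u^\infty\ptcorrectionresolventq\,dz$ stabilizes; for the inner part $\int_0^u$ it extracts the common exponential growth of the three summands (each comparable to $\scalematrix[u]$), notes that the resulting limiting constants must sum to zero because the Gerber--Shiu expression dominating the integrand vanishes as $u\to\infty$, and then reduces the claim to showing that the \emph{speed} of convergence in $\lim_{u\to\infty}e^{-\diagonalmatrixq u}\scalematrix[u]=\leftkernelq$ is exponential. That last, decisive step is proved not through any surrogate construction but through the occupation-time and first-passage identities of \cite{ivanovs2012occupation} together with the exponential bound on the phase-type ruin probability from \cite{asmussen2004russian}. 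To salvage your version, either supply the missing interpolation-existence and uniform-bound arguments, or replace the surrogate device with a direct estimate on the rate of convergence of \scalematrix to its exponential asymptote.
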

\begin{proof}
Indeed, from \cite[Equation~(2.3)]{czarna2018fluctuation}, \cite[Theorem~1]{ivanovs2012occupation}, and \cite[Corollary~1]{ivanovs2014potential}, we have that
\begin{equation}\label{Eq. Scale matrix exponential limit}
\lim_{u\to+\infty} e^{-\diagonalmatrixq u}\scalematrix[u] = \lim_{u\to+\infty} \scalematrix[u] e^{-\leftsolutionq u} = \leftkernelq.
\end{equation}
Now using the fact that $\omega$ is bounded, we can write
\begin{align*}
    \epsilon \constantfactor \narrate &\int_0^\infty \int_0^\infty \penaltyf{y}{z}  \ptcorrectionresolventq \ptcd(z+ dy) dz
    \ \leq\
    \epsilon \constantfactor \narrate a \int_0^\infty \ptcorrectionresolventq \cptcd(z ) dz \\
    &= \epsilon \constantfactor \narrate a \int_0^u \ptcorrectionresolventq \cptcd(z ) dz
    + \epsilon \constantfactor \narrate a \int_u^\infty \ptcorrectionresolventq \cptcd(z ) dz \\
    &\leq \epsilon \constantfactor \narrate a \underbrace{\int_0^u \ptcorrectionresolventq \cptcd(z ) dz}_{:=I_1}
    + \epsilon \constantfactor \narrate a \underbrace{ \cptcd(u) \int_u^\infty \ptcorrectionresolventq  dz}_{:=I_2}.
\end{align*}
It is well known that any phase\-/type distribution is light\-/tailed, that is there exist $\gamma_p$ and $\beta_p$ such that
\begin{equation}\label{Eq. Exponential tail phase-type}
    \lim_{u\to+\infty} \cptcd(u) e^{\gamma_p u}=\beta_p>0.
\end{equation}

Note that the integral $\int_u^\infty e^{-\leftsolutionq z}dz$ is of order $e^{-\leftsolutionq u}$. Consequently, it follows that $\int_u^\infty   \ptcorrectionresolventq  dz$ tends to a constant as $u\to+\infty$
and hence the term $I_2$ is dominated by some exponential function due to \eqref{Eq. Exponential tail phase-type}.

For $I_1$, observe that each summand of $\ptcorrectionresolventq$ behaves like
\scalematrix[u] for fixed $z\leq u$. In other words, there exists by \eqref{Eq. Scale matrix exponential limit} a $\gamma_W>0$ such that
$\lim_{u\to +\infty} \scalematrix[u]e^{\gamma_W u}=\leftkernelq_0$ for a non\-/zero and finite matrix $\leftkernelq_0$. As a result, all the increments in \eqref{Eq. PH part of the corrected resolvent} multiplied by $e^{\gamma_W u}$ tend to some constants, the sum of which must be equal to zero because the Gerber\-/Shiu function that dominates the sum goes to zero.
The asymptotics of $I_1$ are then determined by the sum of the speeds that each increment tends to a constant. Remark that the speeds of convergence of
$\displaystyle - \narrate \mean[p]\int_0^ u \ptecd[u] * \big( \scalematrix[du] \big)_{(1,1)} *\big( \scalematrix[du] \big)_{1\bullet} \cdot \Big( e^{-\leftsolutionq z} \Big)_{\bullet 1}dz
$ and $\displaystyle \narrate \int_0^ u \int_0^{u-z}\mean[p] \ptecd[u-z-x] \big( \scalematrix[dx] \big)_{(1,1)}^{*2}dz$ are of order $\cptcd(u)$ and therefore exponential by \eqref{Eq. Exponential tail phase-type}.

To analyze the second increment in \eqref{Eq. PH part of the corrected resolvent} and complete the proof, it suffices to prove that the speed of convergence in \eqref{Eq. Scale matrix exponential limit} is exponential.
We focus on the first convergence, while the second can be analyzed in a similar way. We have from \cite[Theorem~1]{ivanovs2012occupation} that
 $e^{-\diagonalmatrixq u}\scalematrix[u] =\leftkernelq(u)$, where $\leftkernelq(u)$ is the matrix of expected occupation times at $0$ up to the first passage over $u$ of the process \levynew exponentially killed with intensity $\forceofinterest >0$, i.e.\ up to $\tau_u^+=\inf\{t\geq 0: \levynew \geq u\}$, where \levynew starts right now at level $0$. From the comment below this aforementioned theorem, it follows that
$\lim_{u\to\infty} \leftkernelq(u) =\leftkernelq$, where \leftkernelq is the matrix of expected occupation times
at $0$. If $\tau_u=\inf\{t\geq 0: \levynew =-u\}$ is the first hitting time of level $-u$, it then follows from \cite[Equation~(10)]{ivanovs2012occupation} that
\begin{equation*}
    \leftkernelq-\leftkernelq(u)=\pr_0 \big( J(\tau_u^+) \big)\pr_0\big( J(\tau_u), \tau<+\infty \big)\leftkernelq.
\end{equation*}
To prove that the speed is exponential, it suffices then to show that
$\pr_0 \big( J(\tau_u) \big)$ has an exponential bound. Consequently, it holds that $\pr\big( J(\tau_u) \big)=\int_0^\infty \pr_u \big( | \levynew[\tau] | \in dy, J(\tau), \tau<+\infty \big) \pr_0 \big( J(\tau_y^+) \big)dy$ from \cite[Proposition~7]{ivanovs2012occupation} and the Markov property. Thus, it only remains to show that $\ruinprobability=\pr_u \big( J(\tau), \tau<+\infty \big)$ has an exponential bound, which is true due to \cite[Equation~(19)]{asmussen2004russian}.
\end{proof}

\noindent For the term
\begin{equation*}
    \epsilon \constantfactor \narrate \int_{[0,\infty)} \int_{[0,\infty)} \penaltyf{y}{z}  \htcorrectionresolventq \ptcd (z + dy) dz,
\end{equation*}
we first analyze further \htcorrectionresolventq as follows
\begin{align*}
    \htcorrectionresolventq
    =&\ \narrate  \mean[h] \int_0^u \htecd[u-x] \big( \scalematrix[dx] \big)_{(1,1)} *\big( \scalematrix[dx] \big)_{1\bullet} \cdot \Big( e^{-\leftsolutionq z} \Big)_{\bullet 1} \\
    &- \narrate \int_0^{u-z} \mean[h] \htecd[u-z-x]  \big( \scalematrix[dx] \big)_{(1,1)}^{*2} \\
    =&\ \narrate  \mean[h] \int_0^u \big( \scalematrix[dx] \big)_{(1,1)} *\big( \scalematrix[dx] \big)_{1\bullet} \cdot \Big( e^{-\leftsolutionq z} \Big)_{\bullet 1} - \narrate \mean[h]  \big( \scalematrix[u-z] \big)_{(1,1)}^{*(2)}\\
    &- \narrate  \mean[h] \int_0^u \htecdcom[u-x] \big( \scalematrix[dx] \big)_{(1,1)} *\big( \scalematrix[dx] \big)_{1\bullet} \cdot \Big( e^{-\leftsolutionq z} \Big)_{\bullet 1} \\
    &+ \narrate \mean[h] \int_0^{u-z}  \htecdcom[u-z-x]  \big( \scalematrix[dx] \big)_{(1,1)}^{*2},
\end{align*}
where we used the notation $\big(\scalematrix[x] \big)_{(1,1)}^{*(2)} = \big(\scalematrix[dx] \big)_{(1,1)}*\big(\scalematrix[x] \big)_{(1,1)}$. Thus,
\begin{align}
    \epsilon \constantfactor &\narrate \int_0^\infty \int_0^\infty  \penaltyf{y}{z}  \htcorrectionresolventq \ptcd (z + dy) dz \notag \\
    \leq& \epsilon a \constantfactor \narrate \mean[p] \narrate \mean[h]  \int_0^\infty \int_0^u \big( \scalematrix[dx] \big)_{(1,1)} *\big( \scalematrix[dx] \big)_{1\bullet} \cdot \Big( e^{-\leftsolutionq z} \Big)_{\bullet 1} \ptecd[dz] \notag \\
    &+ \epsilon a \constantfactor \narrate \mean[p] \narrate \mean[h] \int_0^u \big( \scalematrix[u-z] \big)_{(1,1)}^{*(2)} \ptecd[dz] \notag \\
    &-\epsilon \constantfactor \narrate \int_0^\infty \hspace{-0.3cm} \int_0^\infty \hspace{-0.2cm} \penaltyf{y}{z}  \narrate  \mean[h] \int_0^u \hspace{-0.2cm} \htecdcom[u-x] \big( \scalematrix[dx] \big)_{(1,1)} *\big( \scalematrix[dx] \big)_{1\bullet} \cdot \Big( e^{-\leftsolutionq z} \Big)_{\bullet 1} \ptcd(z + dy)   dz \notag \\
    &+ \epsilon \constantfactor \narrate \int_0^\infty \int_0^\infty \penaltyf{y}{z}  \bigg( \narrate \mean[h] \int_0^{u-z}  \htecdcom[u-z-x]  \big( \scalematrix[dx] \big)_{(1,1)}^{*2} \bigg)
 \ptcd(z + dy)   dz \notag \\
 =& \epsilon a \constantfactor \narrate \mean[p] \narrate \mean[h] \big( \scalematrix[du] \big)_{(1,1)} *\big( \scalematrix[u] \big)_{1\bullet} \cdot \bigg( \big( \leftkernelq \big)^{-1} \diag \Big( e^{\ltptec[{\simplerootq[i]}]} \Big)_{i \in \statespace} \leftkernelq \bigg)_{\bullet 1} \notag \\
 &+ \epsilon a \constantfactor \narrate \mean[p] \narrate \mean[h] \ptecd[u] *\big( \scalematrix[du] \big)_{(1,1)}^{*2}
 + \epsilon \constantfactor \narrate \int_0^u \htecdcom[u-x] \varphi(x) dx, \label{Eq. Heavy part of Term 1}
 \intertext{with}
    \varphi(x)
    =& -\narrate  \mean[h] \big( \scalematrix[dx] \big)_{(1,1)} *\big( \scalematrix[dx] \big)_{1\bullet} \cdot \int_0^\infty \int_0^\infty \penaltyf{y}{z}  \Big( e^{-\leftsolutionq z} \Big)_{\bullet 1} \ptcd(z + dy) dz \notag \\
    &+ \narrate  \mean[h]  \big( \scalematrix[dx] \big)_{(1,1)}^{*2} \int_0^\infty \int_0^x \penaltyf{y}{z} \ptcd(z + dy) dz.\notag
\end{align}
Using similar arguments as in \Cref{Lemma: Exponentially bounded}, we can prove that the first two terms in \eqref{Eq. Heavy part of Term 1} are bounded by an exponential.

\paragraph{Term 2.} We continue now our analysis with the term
\begin{equation*}
    \epsilon \constantfactor \narrate
        \int_{[0,\infty)} \int_{[0,\infty)} \penaltyf{y}{z} \densityresolventmatrixq_{(1,1)}
 \htcd(z + dy) dz.
\end{equation*}
Using $e^{-\leftsolutionq z} = \big( \leftkernelq \big)^{-1} e^{-\diagonalmatrixq z} \leftkernelq$, we then write
\begin{align}
    \ \epsilon \constantfactor \narrate &\int_0^\infty \int_0^\infty \penaltyf{y}{z}  \densityresolventmatrixq_{(1,1)}
 \htcd(z + dy) dz
   \leq \ \epsilon \constantfactor \narrate a \mean[h] \int_0^\infty \densityresolventmatrixq_{(1,1)} \htecd[dz] \notag \\
   = &\ \epsilon \constantfactor \narrate a \mean[h] \big( \scalematrix[u] \big)_{1 \bullet} \int_0^\infty \big( e^{-\leftsolutionq z} \big)_{\bullet 1} \htecd[dz]
   - \epsilon \constantfactor \narrate a \mean[h] \int_0^u \big( \scalematrix[u-z] \big)_{(1,1)} \htecd[dz] \notag \\
  =&\ \epsilon \constantfactor \narrate a \mean[h] \big( \scalematrix[u] \big)_{1 \bullet} \cdot \Bigg( \big( \leftkernelq \big)^{-1} \diag \Big( e^{\lthtec[{\simplerootq[i]}]} \Big)_{i \in \statespace} \leftkernelq \Bigg)_{\bullet 1} \notag \\
  &\
   - \epsilon \constantfactor \narrate a \mean[h]  \htecdcom[u-z] \big( \scalematrix[z] \big)_{(1,1)} \Bigm \vert_{0}^u
   + \epsilon \constantfactor \narrate a \mean[h] \int_0^u \htecdcom[u-z] \big( \scalematrix[dz] \big)_{(1,1)} \notag \\
  =&\ \epsilon \constantfactor \narrate a \mean[h] \big( \scalematrix[u] \big)_{1 \bullet} \cdot \Bigg( \big( \leftkernelq \big)^{-1} \diag \Big( e^{\lthtec[{\simplerootq[i]}]} \Big)_{i \in \statespace} \leftkernelq \Bigg)_{\bullet 1}
  - \epsilon \constantfactor \narrate a \mean[h]  \big( \scalematrix[u] \big)_{(1,1)} \notag \\
  &+ \epsilon \constantfactor \narrate a \mean[h] \big( \scalematrix[0]  \big)_{(1,1)} \htecdcom[u] + \epsilon \constantfactor \narrate a \mean[h] \int_0^u \htecdcom[u-z] \big( \scalematrix[dz] \big)_{(1,1)}.\label{Eq. Heavy part in Term 2}
\end{align}
Again, we can prove that the first two terms in \eqref{Eq. Heavy part in Term 2} are bounded by an exponential by using similar arguments as in \Cref{Lemma: Exponentially bounded}.

To show now the main result of this section, we need additional lemma.
We need also the definition of $h$-insensitivity from \cite{foss-IHTSD}, which we call here $\phi$\-/insensitivity to avoid confusion with the index $h$ in \htecds. In particular, given a non\-/decreasing function $\phi$ on \reals[+] such that $\phi(x) \rightarrow \infty$ as $x \rightarrow \infty$, a long\-/tailed distribution $F$ is called $\phi$\-/insensitive (or $\phi$\-/flat) if $\bar{F}\big( x \pm \phi(x) \big) \sim \bar{F}(x)$ as $x \rightarrow \infty$.

\begin{lemma}\label{Lemma: limit for integral}
For any integrable function $\zeta(y)$
we assume that
\begin{equation}\label{lightzeta}
\lim_{u\rightarrow\infty}\frac{\int_{\phi(u)}^\infty\zeta(y)dy}{\htecdcom[u]}=0,
\end{equation}
where \htecds is $\phi$\-/insensitive.
Then it holds that
  \begin{equation*}
      \lim_{u \rightarrow \infty} \frac{\int_0^u \htecdcom[u-y] \zeta(y)dy}{\htecdcom[u]} = \int_0^\infty \zeta(y) dy < \infty.
  \end{equation*}
\end{lemma}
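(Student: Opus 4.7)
The plan is to split the integral at the cutoff $\phi(u)$ and analyse each piece separately, namely
\[
\int_0^u \htecdcom[u-y]\zeta(y)\,dy \;=\; \int_0^{\phi(u)} \htecdcom[u-y]\zeta(y)\,dy + \int_{\phi(u)}^u \htecdcom[u-y]\zeta(y)\,dy.
\]
The first piece will capture the full limit $\int_0^\infty \zeta(y)\,dy$, while the second piece will vanish relative to $\htecdcom[u]$ thanks to assumption \eqref{lightzeta}.

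For the first piece, I would use the $\phi$-insensitivity of \htecds together with the monotonicity of $\htecdcom$. Concretely, for every $y \in [0,\phi(u)]$ one has
\[
1 \;\leq\; \frac{\htecdcom[u-y]}{\htecdcom[u]} \;\leq\; \frac{\htecdcom[u-\phi(u)]}{\htecdcom[u]},
\]
and the right-hand ratio tends to $1$ as $u\to\infty$ by $\phi$-insensitivity. This gives pointwise convergence $\htecdcom[u-y]/\htecdcom[u] \to 1$ for each fixed $y$, together with an eventually uniform bound on $[0,\phi(u)]$. Combined with the integrability $\int_0^\infty |\zeta(y)|\,dy < \infty$ (applied to the positive and negative parts of $\zeta$ if signed), dominated convergence yields
\[
\lim_{u\to\infty} \frac{1}{\htecdcom[u]}\int_0^{\phi(u)} \htecdcom[u-y]\zeta(y)\,dy \;=\; \int_0^\infty \zeta(y)\,dy.
\]

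For the second piece, the bound $\htecdcom[u-y] \leq 1$ gives
\[
\left| \int_{\phi(u)}^u \htecdcom[u-y]\zeta(y)\,dy \right| \;\leq\; \int_{\phi(u)}^\infty |\zeta(y)|\,dy,
\]
and dividing by $\htecdcom[u]$ and sending $u\to\infty$ makes this vanish by hypothesis \eqref{lightzeta}. Adding the two limits completes the proof, and the finiteness of $\int_0^\infty \zeta(y)\,dy$ is exactly the integrability assumption on $\zeta$.

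The only delicate step is the uniform control in the first piece: one must be sure that the pointwise bound $\htecdcom[u-y]/\htecdcom[u] \leq \htecdcom[u-\phi(u)]/\htecdcom[u]$ is eventually bounded by, say, $2$ uniformly in $y \in [0,\phi(u)]$, so that $|\zeta|$ serves as an integrable majorant for dominated convergence. This follows from $\phi$-insensitivity together with monotonicity of $\htecdcom$, but it is the one ingredient where the long-tailed/$\phi$-insensitive hypothesis is genuinely used rather than the trivial bound $\htecdcom \leq 1$.
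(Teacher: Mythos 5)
Your proof is correct and matches the paper's argument: both split the integral at the cutoff $\phi(u)$, control the ratio $\htecdcom[u-y]/\htecdcom[u]$ on $[0,\phi(u)]$ via monotonicity together with $\phi$-insensitivity, and send the tail piece to zero using hypothesis \eqref{lightzeta} with the trivial bound $\htecdcom \leq 1$. The only cosmetic difference is that you package the first piece as a dominated-convergence argument while the paper runs an explicit $\epsilon_1,\epsilon_2$ bound; the content is the same.
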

\begin{proof}
We know from \cite[Theorem~2.28, p.23]{foss-IHTSD} that \htecd is also long\-/tailed and we can then write for any function $\phi$ such that \htecds is $\phi$\-/insensitive,
\begin{align}
    \frac{\int_0^u \htecdcom[u-y] \zeta(y)dy}{\htecdcom[u]}
    &=
    \int_0^{\phi(u)} \frac{ \htecdcom[u-y] }{\htecdcom[u]}\zeta(y)dy
    +
    \int_{\phi(u)}^u \frac{ \htecdcom[u-y] }{\htecdcom[u]} \zeta(y)dy \notag \\
    &\leq  \int_0^{\phi(u)} \frac{ \htecdcom[u-\phi(u)] }{\htecdcom[u]}\zeta(y)dy
    +
    \int_{\phi(u)}^u \frac{\htecdcom[u-y] }{\htecdcom[u]}\zeta(y)dy. \label{Eq. Inequalities for tail behavior}
\end{align}
For fixed $y$, it holds by the definition of long\-/tailed distributions that $\lim_{u \rightarrow \infty} \htecdcom[u-y] /\htecdcom[u] =1$. Moreover, $\lim_{u \rightarrow \infty} \htecdcom[u-\phi(u)] /\htecdcom[u] = 1$ by virtue of $\phi$\-/insensitivity. Consequently, for $u$ large enough, there exist small $\epsilon_1,\epsilon_2>0$ such that $\htecdcom[u-\phi(u)] /\htecdcom[u] \leq 1+\epsilon_1$. Moreover, $\displaystyle \int_{\phi(u)}^u \frac{\htecdcom[u-y] }{\htecdcom[u]}\zeta(y)dy\leq
\int_{\phi(u)}^\infty \zeta(y)dy /\htecdcom[u]$, which tends to zero by \eqref{lightzeta}, hence can be bounded by some $\epsilon_2$ for sufficiently large $u$.
Thus, we can write using \Cref{Eq. Inequalities for tail behavior} that
\begin{align*}
    \lim_{u \rightarrow \infty}\frac{\int_0^u \htecdcom[u-y] \zeta(y)dy}{\htecdcom[u]}
    &\leq  (1+\epsilon_1)\int_0^{\phi(u)} \zeta(y)dy
    +\epsilon_2
        \leq (1+\epsilon_1) \int_0^\infty \zeta(y)dy +\epsilon_2,
\end{align*}
where we used that $\zeta(y)$ is integrable. The required result now follows by letting $\max\{\epsilon_1, \epsilon_2\} \rightarrow 0$.
\end{proof}

\begin{theorem}\label{Theorem: Asymptotics}
When \penaltyfunction is a bounded function with $\penaltyfunction \leq a$, we have
\begin{equation*}
    \lim_{u \rightarrow \infty} \frac{\approxgerbermixq}{\htecdcom[u]} \leq  \epsilon \constantfactor \narrate \Bigg(  a \mean[h] \big( \scalematrix[0]  \big)_{(1,1)} + \int_0^\infty \kappa(x)dx \Bigg),
\end{equation*}
where
\begin{align*}
    \kappa(x)
    =& -\narrate  \mean[h] \big( \scalematrix[dx] \big)_{(1,1)} *\big( \scalematrix[dx] \big)_{1\bullet} \cdot \int_0^\infty \int_0^\infty \penaltyf{y}{z}  \Big( e^{-\leftsolutionq z} \Big)_{\bullet 1} \ptcd(z + dy) dz \\
    &+ \narrate  \mean[h]  \big( \scalematrix[dx] \big)_{(1,1)}^{*2} \int_0^\infty \int_0^x \penaltyf{y}{z} \ptcd(z + dy) dz
    + a \mean[h]  \big( \scalematrix[dx] \big)_{(1,1)}.
\end{align*}
The equality holds exactly when $\penaltyfunction \equiv a$.
\end{theorem}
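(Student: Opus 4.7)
}

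The plan is to collect everything that is exponentially bounded into one bucket that dies when divided by \htecdcom[u] (since \htecd is long\-/tailed and therefore heavier than any exponential), and to identify the surviving terms as convolutions of \htecdcom[u-\cdot] against locally integrable, rapidly decaying kernels, to which \Cref{Lemma: limit for integral} can be applied. Write $\approxgerbermixq = \gerberq + T_1 + T_2$, where $T_1$ is the \correctionresolventq-term and $T_2$ is the $\densityresolventmatrixq_{(1,1)}$-against-$(\htcd-\ptcd)$-term from \Cref{Approximation}. The bound $\gerberq \leq \beta e^{-\gamma u}$ recalled at the start of the section, together with $\penaltyfunction \leq a$ applied to the $\ptcd$-piece of $T_2$, lets me absorb $\gerberq$ and the $\ptcd$-piece of $T_2$ into exponentially bounded error. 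For $T_1$ I would use the already established decomposition $\correctionresolventq = \ptcorrectionresolventq + \htcorrectionresolventq$: \Cref{Lemma: Exponentially bounded} handles \ptcorrectionresolventq entirely, and the preliminary manipulation of \htcorrectionresolventq carried out in the text (leading to \eqref{Eq. Heavy part of Term 1}) already isolates two pieces that \Cref{Lemma: Exponentially bounded}\-/style arguments kill exponentially, plus the surviving heavy piece $\epsilon \constantfactor \narrate \int_0^u \htecdcom[u-x]\varphi(x)dx$. An analogous decomposition of $T_2$, already written out in \eqref{Eq. Heavy part in Term 2}, leaves the two heavy contributions
\[
\epsilon \constantfactor \narrate a \mean[h] \big(\scalematrix[0]\big)_{(1,1)} \htecdcom[u]
\quad\text{and}\quad
\epsilon \constantfactor \narrate a \mean[h] \int_0^u \htecdcom[u-z] \big(\scalematrix[dz]\big)_{(1,1)}.
\]

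With this triage in place, the asymptotic step is mechanical. Dividing \approxgerbermixq by \htecdcom[u], every exponentially bounded piece contributes $0$ in the limit because $e^{-\gamma u}/\htecdcom[u]\to 0$ for any $\gamma>0$ (a consequence of long\-/tailedness of \htecd via \cite[Theorem~2.28]{foss-IHTSD}). The term $\htecdcom[u]/\htecdcom[u]$ contributes the summand $\epsilon \constantfactor \narrate a \mean[h] \big(\scalematrix[0]\big)_{(1,1)}$. For the two surviving convolutions I apply \Cref{Lemma: limit for integral} with $\zeta$ taken first as $\varphi$ and then as the density of $a\mean[h]\big(\scalematrix[dz]\big)_{(1,1)}$; their sum is precisely $\kappa$, and the limits produce $\epsilon \constantfactor \narrate \int_0^\infty \kappa(x)dx$. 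Adding these gives the stated right\-/hand side. The equality case when $\penaltyfunction \equiv a$ is automatic, because the only inequality that is not already preserved as an equality in the limit is $\penaltyf{y}{z} \leq a$, which is then attained.

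The main obstacle is checking the hypothesis \eqref{lightzeta} of \Cref{Lemma: limit for integral} for the two candidate $\zeta$'s. Both candidates are built out of matrix entries of \scalematrix[dx], $\big(\scalematrix[dx]\big)^{*2}_{(1,1)}$ and convolutions thereof, paired with the bounded factors coming from $\penaltyfunction$, $(e^{-\leftsolutionq z})_{\bullet 1}$ and $\ptcd(z+dy)$. The key fact I would invoke, exactly as in the proof of \Cref{Lemma: Exponentially bounded}, is that $e^{-\diagonalmatrixq u}\scalematrix[u]\to \leftkernelq$ exponentially fast (which was itself reduced in that proof to an exponential bound on $\ruinprobability$ via \cite[Equation~(19)]{asmussen2004russian}). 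Consequently each $\zeta$ decays exponentially at infinity, so for any sub\-/linear \htecds\-/insensitive $\phi$ one has $\int_{\phi(u)}^\infty \zeta(y)dy = O(e^{-\gamma \phi(u)})$ for some $\gamma>0$, which is $o(\htecdcom[u])$ by long\-/tailedness; this verifies \eqref{lightzeta} and simultaneously gives integrability of $\zeta$ on $[0,\infty)$. The remaining minor bookkeeping consists in checking that the integration\-/by\-/parts step that produced the boundary term $\big(\scalematrix[0]\big)_{(1,1)}\htecdcom[u]$ in \eqref{Eq. Heavy part in Term 2} contributes no additional surface term at $u$ (it does not, since $\htecdcom[0]\big(\scalematrix[u]\big)_{(1,1)}$ is exponentially bounded and is therefore reabsorbed into the vanishing bucket), after which the pieces line up exactly as in the statement.
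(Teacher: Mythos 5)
Your triage mirrors the paper's (terse) proof: you collect the exponentially bounded pieces, isolate the two surviving heavy convolutions $\int_0^u\htecdcom[u-x]\varphi(x)dx$ from \eqref{Eq. Heavy part of Term 1} and $\int_0^u\htecdcom[u-z]\big(\scalematrix[dz]\big)_{(1,1)}$ together with the boundary term $\big(\scalematrix[0]\big)_{(1,1)}\htecdcom[u]$ from \eqref{Eq. Heavy part in Term 2}, and observe correctly that $\kappa=\varphi+a\mean[h]\big(\scalematrix[dz]\big)_{(1,1)}$; dividing by $\htecdcom[u]$, discarding the exponentially vanishing pieces, and invoking \Cref{Lemma: limit for integral} is exactly the paper's route.

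However, your verification of \eqref{lightzeta} contains a real gap. You claim that ``each $\zeta$ decays exponentially at infinity'' on the strength of $e^{-\diagonalmatrixq u}\scalematrix[u]\to\leftkernelq$. The inference is reversed: the diagonal entries of $\diagonalmatrixq$ have strictly positive real part, and the Laplace transform $\int_0^\infty e^{-sx}\scalematrix[dx]=s\big(\mfq\big)^{-1}$ converges only for $s>\eta(q)>0$, so $\scalematrix[u]$ \emph{grows} exponentially at rate $\eta(q)$ --- this is also what makes $\scalematrix[u]e^{-\leftsolutionq z}$ and $\scalematrix[u-z]$ diverge individually while their difference, the resolvent, stays bounded. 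Consequently $\zeta=a\mean[h]\big(\scalematrix[dz]\big)_{(1,1)}$ grows rather than decays, and the constituent pieces of $\varphi$ grow too, so neither of your two candidate $\zeta$'s is separately integrable on $[0,\infty)$; you therefore cannot apply \Cref{Lemma: limit for integral} to each of them in turn and then add. Integrability and \eqref{lightzeta} can only hold for the combination $\kappa$ as a whole, via cancellation of the leading exponential contributions of its three summands --- the same mechanism the proof of \Cref{Lemma: Exponentially bounded} exploits when it observes that the limiting constants must sum to zero. To close the argument you would need to extract the common leading exponential of the three summands of $\kappa$, verify that its net coefficient vanishes (using the boundedness of the approximation and the exponential decay of the base GS function), show that the remainder decays and satisfies \eqref{lightzeta}, and only then apply \Cref{Lemma: limit for integral} once, to $\kappa$ as a whole.
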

\begin{proof}
The terms of \approxgerbermixq that are bounded by an exponential vanish in the tail and the asymptotic behavior of the approximation can only be attributed to the terms involving \htecds. For the latter terms, it suffices to note that the function $\kappa(x)$ is integrable and satisfies
\eqref{lightzeta} due to \eqref{Eq. Scale matrix exponential limit} (since $\htecdcom$ is heavy-tailed) and apply \Cref{Lemma: limit for integral}. The integrability of $\kappa(x)$ comes from the integrability of the scale matrix \scalematrix[dx], which can be proven by using similar arguments as in \Cref{Lemma: Exponentially bounded}.
\end{proof}

\section{Conclusions}\label{Section: Conslusions}
This work complements other investigations of the Gerber\-/Shiu function for risk processes with two-sided jumps; see e.g.\ \cite{albrecher2010direct,ji2010gerber,labbe2009expected}. However, they all assume special distributions with light tails for the claim sizes. In contrast, our paper allows for additional heavy\-/tailed claims that may appear with a small probability.

Specifically, we combined perturbation analysis with fluid embedding to construct approximations for the Gerber\-/Shiu function. The developed approximations have a proven $O(\epsilon^2)$ error and a heavy\-/tailed behavior in the tail. Moreover, the derived closed\-/form formulas are not only suitable to produce numerical estimates for the GS function but allow us additionally to study theoretical properties of the approximation. Moreover, it could be feasible to calculate exact tail asymptotics when all involved quantities are defined explicitly.

Note that one possible generalization is related with the addition of an independent Brownian component to the risk process. However, the analysis becomes much more complex since Brownian motion could cause ruin by creeping and this case is not treated by our approach. Finally, another possible extension that needs to be researched by its own is to construct approximations for the GS function of a general MAP with all upward jumps of phase\-/type.

\section*{Acknowledgements}
The work of Zbigniew Palmowski is partially supported by National Science Centre Grant No. 2016/23/B/HS4/00566 (2017-2020). Eleni Vatamidou acknowledges financial support from the Swiss National Science Foundation Project 200021\_168993.

\phantomsection
\addcontentsline{toc}{section}{References}
\printbibliography

\end{document}